\newtheorem{remark}{Remark}[section]
\newtheorem{theorem}{Theorem}
\newtheorem{corollary}[theorem]{Corollary}
\newtheorem{proposition}[theorem]{Proposition}
\newtheorem{example}{Example}[section]
\def\b0{{\mathbf 0}}
\def\bc{{\mathbf c}}
\def\be{{\mathbf e}}
\def\bg{{\mathbf g}}\def\bff{{\mathbf f}}
 \def\bn{{\mathbf n}}
\def\bu{{\mathbf u}} \def\bv{{\mathbf v}}
\def\bx{{\mathbf x}} 
\def\bH{{\mathbf H}}
\def\bF{{\mathbf F}}
\def\bN{{\mathbf N}}
\def\bphi{\boldsymbol{\phi}}
\def\pd#1#2{\frac{\partial #1}{\partial#2}}
\def\OM{\Omega}
\def\RR{{\mathbb R}}
\def\cT {{\mathcal T}}
\newcommand{\wtilde}{\widetilde}
\newcommand{\what}{\widehat}
\newcommand{\Div}{\mathrm{div}}
\newcommand{\dstyle}{\displaystyle}
\newcommand{\tnorm}[1]{|\!|\!|#1|\!|\!|}
 \newcommand{\bsigma}{{\boldsymbol{\sigma}}}
\newcommand{\btau}{{\boldsymbol{\tau}}}
\newcommand{\bepsilon}{\boldsymbol{\epsilon}}
\def\bphi{\mbox{\boldmath{$\phi$}}}
\def\bsigma{\mbox{\boldmath{$\sigma$}}}
\makeatletter\@addtoreset{equation}{section}\makeatother
\def\pd#1#2{\frac{\partial #1}{\partial #2}}
\begin{document}
\title {A stabilized $P_1$-nonconforming immersed  finite element method for the interface elasticity problems}

\author{Do Y. Kwak}
\address{Korea Advanced Institute of Science and Technology,
Daejeon, Korea 305-701. {\tt email:kdy@kaist.ac.kr}. This  author is  supported by  NRF, No.2014R1A2A1A11053889.
}
\author{Sang W. Jin}
\address{Korea Advanced Institute of Science and Technology,
Daejeon, Korea 305-701. {\tt email:jinsangwon@kaist.ac.kr}.
}
\author{Dae H. Kyeong}
\address{Korea Advanced Institute of Science and Technology,
Daejeon, Korea 305-701. {\tt email:huff@kaist.ac.kr}.
}
%\date{May 18th, 2015}

%--------------------------------
\begin{abstract}
  We develop  a new finite element method for solving planar elasticity problems involving of heterogeneous materials
  with a mesh not necessarily aligning  with the interface of the materials. This method is  based on the `broken'  Crouzeix-Raviart $P_1$-nonconforming finite element method for elliptic interface problems \cite{Kwak-We-Ch}.
  To ensure the coercivity of the bilinear form arising from using the nonconforming finite elements, we
  add stabilizing terms as in the discontinuous Galerkin (DG) method  \cite{Arnold-IP},\cite{Ar-B-Co-Ma},\cite{Wheeler}.
The novelty of our method  is that we use meshes independent of the interface, so that the interface
may cut through  the elements. Instead, we modify the basis functions so that they satisfy the Laplace-Young condition along the interface of each element.  We prove optimal $H^1$ and divergence norm error estimates.
  Numerical experiments are carried out to demonstrate that the our method is optimal for various Lam\`e parameters $\mu$ and $\lambda$ and locking free as $\lambda\to\infty$.
\end{abstract}

\subjclass{AMS MOS 65N30, 74S05, 74B05}
\keywords{immersed finite element method, Crouzeix-Raviart finite element, elasticity problems, heterogeneous materials, Laplace-Young condition}

\maketitle \pagestyle{myheadings} \thispagestyle{plain}
\markboth{Do Y. Kwak, Sangwon Jin and Dae H. Kyeong} {
A stabilized $P_1$ immersed  finite element method for the interface elasticity problems}

  %%% Submisssion statement: May 26-2014
  %% 1. Decisive advance: Developed a new fem using P_1 element with uniform grid for elasticity interface problem and provided rigorous error analysis
  %% 2. Since it uses uniform grid for smooth interface or boundary, for
%elasticity problem. It is easy to use, easy to develop fast solver.
  %% 3. Used a modified P-1 Crouzeix-Raviart element to satisfy traction condition along the interface. Also used stability term in the bilinear form for the Korn's inequality to hold.
 % Rigorous error estimate in H^1, L^2 and H(div) are shown.
\section{ Introduction}

Linear elasticity equation plays an important role in solid mechanics. In particular, when an elastic body
is occupied by heterogeneous materials having distinct Lam\`e parameters $\mu$ and $\lambda$, the governing equation
holds on each disjoint domain and certain jump conditions must be satisfied
along the interface of two materials \cite{A.P.Hansbo2004}.
%%%%
This  kind  of problems involving composite materials is getting more and more attentions from both engineers and mathematicians in recent years, but efficient numerical schemes are not fully developed  yet. To solve  such equations numerically, one usually uses finite element methods with meshes aligned with the interface between two  materials.
However, such methods involve unstructured grids resulting in algebraic systems which involve more unknowns and
irregular data structure.

 Solving linear elasticity equation with finite element methods has been studied extensively and several methods have been developed, see \cite{Arnold-Wint},\cite{Brenner_Sung},\cite{Falk}  and references therein.
 For lower order methods, when $P_1$-conforming element method is applied, the so-called `locking phenomena' is observed when the material is nearly incompressible
 (\cite{Babuska-Suri1},\cite{Babuska-Suri2},\cite{BrezziFortin}). Brenner and Sung \cite{Brenner_Sung} showed that the
Crouzeix-Raviart (CR) $P_1$-nonconforming  element \cite{Crouzeix} does not lock on pure displacement problem.
 But one cannot  use this element  to a traction-boundary problem since it does not satisfy discrete Korn's inequality.
 A remedy was  recently suggested by Hansbo et al. \cite{P.Hansbo_Lar2002} who exploited the idea of discontinuous  Galerkin methods (\cite{Arnold-IP},\cite{Ar-B-Co-Ma},\cite{Wheeler}). By introducing a stabilizing term, they proved the convergence of a locking free $P_1$-nonconforming method for problems with traction boundary conditions.

 Solving problems with composite materials is more difficult. Since the Laplace-Young condition holds along  the interface, these problems exhibit a similar property as the traction boundary type problems, even if the Dirichlet boundary condition is imposed
 on the boundary of the whole domain. Thus the CR element may not work properly for such problems.
 %cannot be used without the stability term.
In the discussion of the above methods, meshes are assumed to be aligned with the interface.
We will resolve this problem  by adding  stabilizing terms for unaligned grids (See below).
% Also, it is necessary to use fitted

 On the other hand, alternative methods which use meshes independent of interface,
  thus allowing the interface to cut through the elements, have been developed recently for diffusion problems. The motivations for using such meshes are : Easiness of grid generations, treatment of moving grids, especially time dependent problems, simple data structure of linear system, fast solvers, and so on.
There are two types of such methods in principle: One belongs to the extended finite element methods (XFEM) (\cite{Belytschko_Parimi}, \cite{Belytschko_Black}, \cite{Krysl-Bely},  \cite{Legrain}, \cite{Moes:99})
 and another belongs to  the immersed finite element methods (IFEM). (\cite{Chang_Kwak}, \cite{Ch-Kw-We}, \cite{Kwak-We-Ch},\cite{Li2004},\cite{Li2003})
In the XFEM type we need, in addition to the standard nodal basis functions, enriched basis functions obtained by truncating the shape functions along the interface cut so that three (six for planar elasticity problems) extra degrees of freedom are present per element.
But in the IFEMs, we do not require extra degrees of freedom, instead modify the finite element shape functions so that they satisfy certain jump conditions along the interface.

For some XFEM type of works related to the interface elasticity problems, we refer to
\cite{Belytschko_Parimi}, \cite{Belytschko_Black}, \cite{Krysl-Bely},  \cite{Legrain}, \cite{Moes:99}, where they
added enriched
basis functions obtained by multiplying Heaviside functions along the crack, and asymptotic basis of polar form near the tip. Even so, they often use grid refinement near the interface.
See Hansbo et al. \cite{BeckerB.Hansbo},\cite{A.P.Hansbo2002},\cite{A.P.Hansbo2004}, where they used
Nitsche's \cite{Nitsche}  idea of adding penalty terms along the
  interface of elements. %.
For methods based on finite difference, see \cite{Lai_Li_L},\cite{LeVequeLi},\cite{Li1994},\cite{Oeverm_S_K}, for example.

 In this paper, we develop a new method based on the IFEM using the broken CR element for a linear
 elasticity problem having an interface.
 We modify the (vector) basis functions to satisfy the Laplace-Young condition along the interface.
 Our method does not use any extra shape function as in XFEM, hence our method yields exactly the same matrix structure as
the problems of constant Lam\'e parameters, and  has less degrees of freedom than XFEM.
Furthermore, numerical results show that our method does not need a mesh refinement.

 %, we prove optimal error estimates.
%  we ensure the bilinear form is coercive even when interior interface conditions is involved.

 Near the completion of our first version of this manuscript \cite{Kwak-Jin}, we found that Lin et al. \cite{Lin-Zhang-Sh} have developed an IFEM similar to ours based on the rotated $Q_1$ nonconforming element without using stability terms to solve elasticity equations with interface, but  no analysis is given.
In contrast, we prove optimal error estimates of our scheme (based on CR $P_1$ nonconforming element), by adding stabilizing terms along the edges of elements for the coercivity of the bilinear forms.
%introduce notations and
 The rest of our paper is organized as follows. In section 2, we introduce the linear elasticity problems having interior interface along which the Laplace-Young condition holds and state their local regularity. For simplicity, we assume
 the Dirichlet data even though traction boundary condition on some part of boundary can be assigned.
  In section 3, we introduce our  new scheme for solving such problems using the CR $P_1$ nonconforming finite element.
  % we review a stabilized CR finite element method suggested by  Hansbo et al. \cite{P.Hansbo_Lar2002}
  % for the elasticity problems with constant parameters.
  For this purpose, we modify the vector basis functions so that they satisfy  the Laplace-Young condition along the interface. In section 4, we introduce various norms and function spaces related to interface problems. Next we prove the approximation property of our finite element space and optimal error estimates in $H^1$ and divergence norm. Finally, numerical experiments are presented in section 5, which supports our results.

\section{Preliminaries}

Let $\Omega$ be a connected, convex polygonal domain in $\RR^2$ which is divided into two subdomains $\Omega^+$ and $\Omega^-$ by a $C^2$ interface $\Gamma = \partial \Omega^+ \cap \partial\Omega^- $, see Figure  \ref{fig:doma0}. We assume the subdomains $\Omega^+$ and $\Omega^-$ are occupied by two  elastic materials having
different Lam\'e constants.
 For a   differentiable function $\bv=(v_1,v_2) $ and a tensor $\btau=
\begin{pmatrix}  \tau_{11} &  \tau_{12} \\  \tau_{21}&  \tau_{22} \end{pmatrix} $, we let
 $$ \begin{array}{rl} \dstyle
  \nabla \bv=\begin{pmatrix} \pd{v_1}{x}& \pd{v_1}{y} \\ \pd{v_2}{x}& \pd{v_2}{y} \end{pmatrix}, \quad &
 \Div \btau = \begin{pmatrix} \pd{\tau_{11}}x +\pd{\tau_{12}}y \\ \pd{\tau_{21}}x +\pd{\tau_{22}}y  \end{pmatrix}.
 \end{array}$$
Then  the displacement $\bu=(u_1,u_2)$ of the elastic body under an external force satisfies the Navier-Lam\'e equation as follows.
 %$\bu \in (H_0^1)^2 $ such that
\begin{eqnarray}
-\Div \bsigma(\bu)  &=& \mathbf{f} ~~ \mathrm{ in}~  \Omega^s, \, (s=+,-) \label{TheprimalEq} \\
{[\bu]}_\Gamma &=& 0,  \\
{[\bsigma(\bu)\cdot \bn]}_\Gamma&=&0, \label{jump2.3} \\
\bu &=& 0  ~~ \mathrm{ on}~  \partial\Omega, \label{jump2.4}
\end{eqnarray}
where \begin{equation}
 \bsigma(\bu) = 2\mu \bepsilon(\bu) + \lambda tr(\bepsilon(\bu))\boldsymbol{\delta},\,\, \bepsilon(\bu) = \frac{1}{2}(\nabla \bu + {\nabla \bu}^T)
\end{equation} are   the stress tensor  and the strain tensor respectively,
%where $\bsigma(\bu) = 2\mu \bepsilon(\bu) + \lambda tr(\bepsilon(\bu))\boldsymbol{\delta} $ is the stress tensor,
%$\bepsilon(\bu) = \frac{1}{2}(\nabla \bu + {\nabla \bu}^T) $ the strain tensor,
 $\bn$ is outward unit normal vector,  $\boldsymbol{\delta}$ is the identity tensor, and $\mathbf{f} \in (L^2(\Omega))^2$ is the external force. Here
  $$ \lambda=\frac{E\nu}{(1+\nu)(1-2\nu)},\quad \mu =\frac{E}{2(1+\nu) }$$
  are the Lam\'e constants  satisfying $0<\mu_1 <\mu <\mu_2$ and $0<\lambda<\infty$, $E$ is the Young's modulus, and $\nu$ is the Poisson ratio. When the parameter $\lambda \rightarrow \infty$, this equation describes the behavior of nearly incompressible material. Since the material properties are different in each region,  we set the Lam\'e constants $\mu = \mu^s, \lambda = \lambda^s ~~\mathrm{ on}~~ \Omega^s$ for $s = +,-$.
 The bracket $[\cdot]$ means the jump across the interface
 $$[\bu]_\Gamma := \bu|_{\Omega^+} - \bu|_{\Omega^-}.$$

 Multiplying $\bv \in (H^1_0 (\Omega))^2$ and applying Green's identity in each domain $\Omega^s$, we obtain
\begin{equation} \label{variational_form0}
\int_{\Omega^s} 2 \mu^s \bepsilon(\bu):\bepsilon(\bv)dx + \int_{\Omega^s} \lambda^s \Div \bu\,\Div \bv\,dx - \int_{\partial\Omega^s}\bsigma(\bu) \mathbf{n} \cdot \bv ds = \int_{\Omega^s} \mathbf{f} \cdot \bv dx,
\end{equation}
where % the tensor inner product $:$ defined by
\begin{equation*}
\bepsilon(\bu):\bepsilon(\bv) = \sum_{i,j=1}^2 \bepsilon_{ij}(\bu) \bepsilon_{ij} (\bv).
\end{equation*}
Summing over $s=+,-$ and applying the interior traction condition (\ref{jump2.3}), we obtain the following weak form
\begin{equation} \label{variational_form}
a(\bu,\bv) =  (\bff,\bv),
\end{equation}
where
\begin{equation} \label{a_form}
a(\bu,\bv) = \int_{\Omega} 2 \mu \bepsilon(\bu):\bepsilon(\bv)dx + \int_{\Omega} \lambda \Div \bu\,\Div \bv\,dx.
\end{equation} As usual, $(\cdot,\cdot)$ denotes the $L^2(\OM)$ inner product.
  Then we have the following result \cite{A.P.Hansbo2004},  \cite{Leguillon}.
\begin{thrm}
There exists a unique solution  $\bu \in (H_0^1(\Omega))^2$ of (\ref{TheprimalEq}) - (\ref{jump2.4}) satisfying
 and $\bu \in (H^2(\Omega^s))^2, s= +, -.$
Here, $H^1(\Omega), {H}^2(\Omega^s)$ etc., are usual Sobolev spaces on respective domains and $H_0^1(\Omega)$ is
a subspace of $H^1(\Omega)$ functions having zero trace.
\end{thrm}
%discrete  Poincar\'e   inequality.
 \begin{figure}[t]
\begin{center}
      \psset{unit=1.5cm}
      \begin{pspicture}(-1,-1)(1,1)
        \pspolygon(0.9,0.9)(-0.9,0.9)(-0.9,-0.9)(0.9,-0.9)
        \psccurve(0.47,0) (0.2,0.22)(-0.6,-0.1)(-0.2,-0.5)(0.5,-0.11)
        \rput(0,0){\scriptsize$\Omega^-$}
        \rput(-0.3,0.5){\scriptsize$\Omega^+$}
        \rput(0.67,0){\scriptsize$\Gamma$}

      \end{pspicture}
      \begin{pspicture}(-2.3,-1)(-0.3,1)
        \pspolygon(0.9,0.9)(-0.9,0.9)(-0.9,-0.9)(0.9,-0.9)

        \pscurve(-0.9,0.3) (-0.6,0.12)(0.1,-0.23)(0.5,0.25)(0.9,0.4)
        \rput(-0.23,-0.43){\scriptsize$\Omega^-$}
        \rput(-0.1,0.35){\scriptsize$\Omega^+$}
        \rput(0.5,0.1){\scriptsize$\Gamma$}

      \end{pspicture}

\caption{Domains $\Omega$ with interface}
  \label{fig:doma0}
\end{center}
\end{figure}
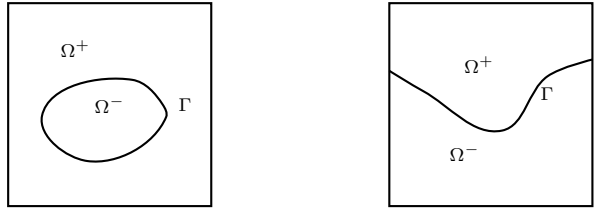
%  a new method based on the IFEM using the broken CR element for a linear
% elasticity problem involving smooth interface.
\section{An IFEM based on Crouzeix-Raviart element for the elasticity equation with interface}

In this section, we extend the CR type IFEM, which was first suggested by the author\cite{Kwak-We-Ch} for the elliptic problems to the elasticity equation with interface.
Before developing the scheme, we briefly review the stabilized version of FEM for the elasticity equation without interface (i.e., $\lambda^+= \lambda^-$
and $\mu^+=\mu^-$) introduced by Hansbo and Larson \cite{P.Hansbo_Lar2002}.

 %Even though this method was introduced for an elasticity equation without interface, it can be also
%used for an interface problem as long as the grids are aligned with the interface.
 Let $\{\mathcal{T}_h\}$ be a given quasi-uniform triangulations of $\Omega$ by the triangles of
  maximum diameter $h$. % whose grids are aligned with the interface.
For each $T \in \mathcal{T}_h$, one constructs local basis functions using the average value along each edge as degrees of freedom. Let $$\overline{v}|_e = \frac 1 {|e|} \int_e v ds$$
denote the average of a function $v \in H^1(T)$ along an edge $e$ of $T$. Here  $|S|$  means the Lebesgue measure for any set $S\subset \RR^n, n=1,2,3$. Let $\bN_h(T)$ denote the linear space spanned by the six Lagrange basis functions
$$\bphi_i = (\phi_{i1},\phi_{i2})^T,\ i=1,2,3,4,5,6 $$
satisfying
$$\begin{aligned}
\overline{\phi_{i1}}|_{e_j} & = \delta_{ij},  & \overline{\phi_{i2}}|_{e_j} & =  \delta_{i-3,j},& j=1,2,3, &
\end{aligned}$$
where $\delta$ is the Kronecker delta.
The vector form of Crouzeix-Raviart $P_1$-nonconforming space is given by
$$ \bN_h(\Omega)= \left\{
\begin{aligned}
 \bphi: \bphi|_T\in \bN_h(T) &\mbox{ for each } T\in\cT_h;\ \mbox{ if $T_1$ and $T_2$ share an edge $e$,}\\
\mbox{ then } &\int_{e}{\bphi}|_{\partial T_1} ds= \int_{e}{\bphi}|_{\partial T_2}  ds; \mbox{ and }
  \int_{\partial T \cap \partial\Omega}{\bphi}\,ds=\b0
\end{aligned}
\right\}.$$

The stabilized $P_1$-nonconforming finite element method for (\ref{variational_form}) is : find $\bu_h \in \bN_h(\Omega)$ such that
\begin{equation}\label{Hansbo:disc-form}
a_h(\bu_h,\bv_h) = (\bff,\bv_h), ~\quad ~~ \forall \bv_h \in \bN_h (\Omega),
\end{equation}
where
\begin{eqnarray}\label{a_h_form}
a_h(\bu_h,\bv_h) :& =&\sum_{T\in \mathcal{T}_h}\int_T 2 \mu \bepsilon(\bu_h):\bepsilon(\bv_h)dx +
 \sum_{T\in \mathcal{T}_h}\int_T  \lambda \Div \bu_h\,\Div \bv_h\,dx  \nonumber\\
&& + \tau \sum_{e \in \mathcal{E}}  \int_{e} h^{-1} [\bu_h][\bv_h]ds \text{ for some $\tau > 0$}.
\end{eqnarray}
 For a problem without an interface,
Hansbo and Larson \cite{P.Hansbo_Lar2002} proved the following result.
 \begin{thrm} \label{Hansbo-error}
Let $\bu$ be the solution of (\ref{TheprimalEq}) and $\bu_h$ be the solution of (\ref{Hansbo:disc-form}). Then
$$\|\bu - \bu_h\|_{a_h} \leq Ch\|f\|_{L_2(\Omega)},$$
where $\|\cdot\|_{a_h}= a_h(\cdot,\cdot)^{1/2}$.
\end{thrm}

\subsection*{Construction of broken CR-basis functions satisfying Laplace-Young condition}

Now we are ready to introduce our IFEM. We consider an elasticity equation with an interface.
Let $\{\mathcal{T}_h\}$ be any quasi-uniform triangulations of $\Omega$ of maximum diameter $h$. We allow the grid to be cut by the interface. %i.e. grids are {\it not necessarily aligned with the interface}.

\begin{figure}[ht]
  \begin{center}
    \psset{unit=2.8cm}
    \begin{pspicture}(0,0)(1,1)
      \psset{linecolor=black} \pspolygon(0,0)(1,0)(0,1) \psline(0,0.65)(0.35,0)
      \pscurve(0,0.65)(0.1,0.58)(0.2,0.15)(0.35,0)
%      \psdot(0.5,0)
      \rput(0,1.05){\scriptsize$A_3$}
      \rput(-0.05,0){\scriptsize$A_1$}
      \rput(1.05,0){\scriptsize$A_2$}
      \rput(0.725,-0.07){$e_3$}
%      \psdot(0.5,0.5)
      \rput(0.55,0.55){$e_1$}
%      \psdot(0,0.5)
      \rput(-0.06,0.5){$e_2$}
      \rput(-0.25,0.65){\scriptsize$E=(0,y)$}
      \rput(0.08,0.12){\scriptsize$T^-$}
      \rput(0.45,0.25){\scriptsize$T^+$}
      \rput(0.35,-0.08){\scriptsize$D=(x,0)$}
      \rput(0.20,0.48){\scriptsize$\Gamma$}
% \rput(-.35,0.65){\scriptsize$T_r^+$}
\pnode(-.3,0.6){a}
\pnode(0.12,0.5){b}
%\ncarc[linewidth=0.5\pslinewidth]{->}{a}{b}
%
%\pnode(-.3,.3){a}
%\pnode(0.22,0.21){b}
%\ncarc[linewidth=0.5\pslinewidth]{->}{a}{b}
%\rput(-.35,0.23){\scriptsize$T_r^-$}
    \end{pspicture}
    \caption{A typical interface triangle} \label{fig:interel}
\end{center}
\end{figure}
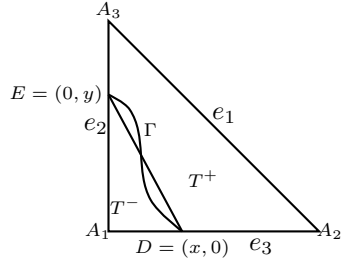
We call an element $T\in\mathcal{T}_h$ an {\it interface element} if the interface $\Gamma$ passes through the interior of $T$, otherwise we call it a {\it noninterface} element.  Let $\mathcal{T}^*_h$ be the collection of all interface elements. We assume the following situations which are easily satisfied  when $h$ is small enough:
\begin{itemize}
\item the interface intersects the edges of an element at no more than two points. %[$\diamond$]
\item the interface intersects each edge at most once, except possibly it passes through two vertices.
\end{itemize}

The main idea of the IFEM for elasticity problem is to use two pieces of linear shape functions (vector form) on an interface  element to   satisfy the Laplace-Young condition. We set, for $i=1,2,\cdots,6$,
%  $\hat \bphi_i (i=1,2,\cdots,6)$
\begin{eqnarray}
 &&\hat\bphi_i(x,y) = \left\{%
 \begin{array}{ll}
    \hat\bphi^+_i (x,y) =
    \begin{pmatrix}
     \hat\phi^+_{i1} \\  \hat\phi^+_{i2}
    \end{pmatrix} = \begin{pmatrix} a_1^+ + b_1^+ x + c_1^+ y \\
    a_2^+ + b_2^+ x + c_2^+ y \end{pmatrix} , \quad (x,y) \in  T^+ \\
     &\\
    \hat\bphi^-_i (x,y) =\begin{pmatrix}
    \hat \phi^-_{i1} \\  \hat\phi^-_{i2}
    \end{pmatrix} = \begin{pmatrix} a_1^- + b_1^- x + c_1^- y  \\
    a_2^- + b_2^- x + c_2^- y \end{pmatrix} , \quad  (x,y) \in T^-\\
\end{array}%
\right. \label{def:basis-1}
\end{eqnarray} and require these functions satisfy the $6$ degrees of freedom (edge average), continuity, and jump conditions:
\begin{equation} \label{eq:dof1}
\begin{array}{ccl}
\overline{\hat\phi_{i1}}|e_j & = & \delta_{ij},\, j=1,2,3\\
\overline{\hat\phi_{i2}}|e_j & = & \delta_{(i-3)j},\, j=1,2,3 \\
{[\hat \bphi_i(D)]} & = & 0, \\
{[\hat \bphi_i(E)]} & = & 0, \\
\dstyle\left[\bsigma(\hat\bphi_i) \cdot \mathbf{n}\right]_{\overline{\textrm{\tiny{$DE$}}}} & = & 0 .
\end{array}
\end{equation}
These twelve conditions lead to a system of linear equations in twelve unknowns for each $i$.

\begin{proposition}\label{prop:exiten-CRbasis}
The conditions  (\ref{eq:dof1}) uniquely determine the function $\hat\bphi_i$
 of the form  (\ref{def:basis-1}), regardless of the interface locations.
\end{proposition}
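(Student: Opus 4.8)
The plan is to observe that the twelve relations in (\ref{eq:dof1}) constitute a \emph{square} linear system in the twelve coefficients of (\ref{def:basis-1}): six edge‑average equations, two scalar continuity equations at $D$, two at $E$, and two more from the Laplace--Young condition $[\bsigma(\hat\bphi_i)\cdot\bn]_{\overline{DE}}=\b0$ (which is two scalar equations, since $\hat\bphi_i$ is affine on each piece and hence $\bsigma(\hat\bphi_i)$ is piecewise constant). For a square system, unique solvability for every right‑hand side is equivalent to triviality of the kernel, so it suffices to show: if $\hat\bphi$ of the form (\ref{def:basis-1}) has all six edge averages zero, $[\hat\bphi(D)]=[\hat\bphi(E)]=\b0$, and $[\bsigma(\hat\bphi)\cdot\bn]_{\overline{DE}}=\b0$, then $\hat\bphi\equiv\b0$. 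Throughout write $\bsigma^{\pm}(\bv):=2\mu^{\pm}\bepsilon(\bv)+\lambda^{\pm}(\Div\bv)\,\ddelta$.

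First I would use the two continuity conditions. Since $\hat\bphi^{+}$ and $\hat\bphi^{-}$ are affine and agree at the two distinct points $D$ and $E$, they agree on the whole line containing $\overline{DE}$; hence $\hat\bphi^{+}-\hat\bphi^{-}$ extends to the affine field $\ell(\bx)\,\db$, where $\ell$ is the scalar affine function vanishing on $\overline{DE}$ with $\nabla\ell=\bn$ and $\db\in\RR^{2}$ is constant. This leaves only the eight parameters of the pair $(\bg,\db)$ with $\bg:=\hat\bphi^{-}$. Next I would feed this into the Laplace--Young condition. Because $\nabla(\ell\,\db)=\db\,\bn^{T}$, one computes $\bsigma^{+}(\ell\,\db)\cdot\bn=\mu^{+}\db+(\mu^{+}+\lambda^{+})(\db\cdot\bn)\bn$, so the jump condition reduces to \[ \big(\mu^{+}\ddelta+(\mu^{+}+\lambda^{+})\,\bn\,\bn^{T}\big)\db \;=\; -2(\mu^{+}-\mu^{-})\,\bepsilon(\bg)\bn-(\lambda^{+}-\lambda^{-})(\Div\bg)\,\bn . \] The matrix on the left has eigenvalues $\mu^{+}$ and $2\mu^{+}+\lambda^{+}$ (eigenvectors the unit tangent $\bt$ and the unit normal $\bn$ of $\overline{DE}$), both strictly positive by $0<\mu_{1}<\mu$ and $0<\lambda$; it is therefore invertible for \emph{every} interface direction, so $\db$ is a fixed linear function of $\nabla\bg$ and $\hat\bphi$ is now determined by the affine field $\bg$ alone.

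It then remains to show the six edge‑average conditions force $\bg\equiv\b0$. With $M_{j}$ the midpoint of $e_{j}$ and $c_{j}:=\tfrac{1}{|e_{j}|}\int_{e_{j}\cap T^{+}}\ell\,ds$ a purely geometric constant, these conditions read $\bg(M_{j})+c_{j}\db=\b0$, $j=1,2,3$ (using $\overline{\bg}|_{e_{j}}=\bg(M_{j})$ for affine $\bg$). Differencing them and using that $M_{2}-M_{1}$ and $M_{3}-M_{1}$ span $\RR^{2}$ shows $\nabla\bg=\db\,\br^{T}$ for an explicit geometric vector $\br$; substituting this back into the relation for $\db$ above collapses everything to a homogeneous $2\times2$ system for the normal and tangential components of $\db$, whose determinant $\Delta$ is an explicit expression in $\mu^{\pm},\lambda^{\pm}$ and the two scalars $\rho:=\br\cdot\bn$, $\beta:=\br\cdot\bt$. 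If $\Delta\neq0$ then $\db=\b0$, hence $\bg\equiv\b0$ (from $\bg(M_{1})=-c_{1}\db=\b0$ and $\nabla\bg=\b0$) and $\hat\bphi\equiv\b0$, proving the claim.

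The hard part is this last step: showing $\Delta\neq0$ for \emph{every} admissible interface position. This is not automatic --- $\Delta$ would vanish for suitable $(\rho,\beta)$ if these were unconstrained --- so one must bring in geometry. Because the chord $\overline{DE}$ separates $T$ and $\ell$ keeps a single sign on each side, the constants $c_{j}$, and hence $\rho$ and $\beta$, are confined to a range in which the two factors of $\Delta$ are convex‑combination--type quantities trapped between $\mu^{-}$ and $\mu^{+}$, respectively between $2\mu^{-}+\lambda^{-}$ and $2\mu^{+}+\lambda^{+}$, and therefore strictly positive. Pinning down these ranges, together with the companion bound on $\beta$, from the admissibility hypotheses on how $\Gamma$ meets $\partial T$ is where the real work lies; alternatively one can verify by direct computation that the $12\times12$ determinant is a nonvanishing function of the interface parameters.
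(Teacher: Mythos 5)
Your reduction is correct as far as it goes (the kernel argument, the representation $\hat\bphi^{+}-\hat\bphi^{-}=\ell\,\db$, the invertibility of $\mu^{+}\ddelta+(\mu^{+}+\lambda^{+})\bn\bn^{T}$, and the edge-average step giving $\nabla\bg=\db\,\br^{T}$ are all sound, and they would yield a cleaner route than a brute-force $12\times12$ elimination), but the proof stops exactly at the point where the proposition actually lives. Uniqueness ``regardless of the interface location'' is precisely the claim that your $2\times2$ determinant $\Delta$ never vanishes for admissible chords $\overline{DE}$, and you neither compute $\Delta$ nor pin down the admissible range of $(\rho,\beta)$; you acknowledge this and fall back on ``one can verify by direct computation that the $12\times12$ determinant is nonvanishing,'' which is just restating the proposition. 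Moreover, your heuristic for why it should hold is not accurate as stated: with $\db=d_n\bn+d_t\bt$ the system reads
\begin{equation*}
\bigl[(2\mu^{+}+\lambda^{+})+(2\mu^{+}+\lambda^{+}-2\mu^{-}-\lambda^{-})\rho\bigr]d_n+(\lambda^{+}-\lambda^{-})\beta\,d_t=0,\qquad
(\mu^{+}-\mu^{-})\beta\,d_n+\bigl[\mu^{+}+(\mu^{+}-\mu^{-})\rho\bigr]d_t=0,
\end{equation*}
so $\Delta$ is a \emph{difference} of products, containing the coupling term $(\lambda^{+}-\lambda^{-})(\mu^{+}-\mu^{-})\beta^{2}$ of indefinite sign; positivity of the two diagonal ``convex-combination'' factors alone does not give $\Delta\neq0$, and nothing in your argument relates $\rho$ and $\beta$ to the geometry of the chord, which is what makes the cross term harmless.

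This missing computation is exactly what the paper's appendix does: it writes the full system in reference coordinates with $D=(x,0)$, $E=(0,y)$, $\bn\propto(y,x)$, eliminates, and shows the determinant equals, up to a nonzero factor, $A(2A+B)(x^{2}+y^{2})^{2}$ with $A=\mu^{+}xy+\mu^{-}(1-xy)$ and $2A+B=(2\mu^{+}+\lambda^{+})xy+(2\mu^{-}+\lambda^{-})(1-xy)$; since $0\le xy\le 1$ these are genuine convex combinations of positive constants, so the determinant never vanishes. To complete your version you would have to carry out the analogous step: express $\rho$ and $\beta$ explicitly through the constants $c_{j}$ and the midpoints (equivalently through $x,y$), substitute into $\Delta$, and verify that the cross term is absorbed so that $\Delta$ reduces to such a product. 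Until that is done, the proof has a genuine gap at its central step.
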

\begin{proof}
See Appendix A for details.
\end{proof}

We denote by $\what{\bN}_h(T)$ the space of functions generated by $\hat{\bphi}_i, i = 1,2,3,4,5,6$ constructed above. Using this local finite element space, we define the global {\em immersed finite element space} $\what{\bN}_h (\Omega)$
by
%for the elasticity problem to be the set of all functions $\hat\bphi \in (L^2 (\Omega))^2$ such that
$$ \what{\bN}_h (\Omega)=
\left\{\begin{array}{l}
\hat \bphi\in \what{\bN}_h(T) \mbox{ if } T\in \mathcal{T}_h^*, \mbox{ and }  \hat\bphi\in \bN_h(T) \mbox{ if } T \not\in \mathcal{T}_h^*; \\
\mbox{ if $T_1$ and $T_2$ share an edge $e$, then }\\
  \int_{e}{\hat\bphi}|_{\partial T_1} ds= \int_{e}{\hat\bphi}|_{\partial T_2}  ds; \mbox{ and }
  \int_{\partial T \cap \partial\Omega}{\hat\bphi}\,ds=\b0
 \end{array}
\right\}. $$

We now propose an IFEM scheme for (\ref{TheprimalEq})-(\ref{jump2.4}).
\subsection*{CRIFEM}
Find $\bu_h \in \what{\bN}_h(\Omega) $ such that
\begin{equation} \label{Discrete}
a_h(\bu_h,\bv_h) =  (\bff,\bv_h),\, ~~~ \forall \bv_h \in \what{\bN}_h(\Omega),
\end{equation} where
$a_h(\cdot,\cdot)$ is the same as  (\ref{a_h_form}).

\section{Error Analysis}
%In this section, we prove an optimal order $H^1$ norm convergence of our schemes.
  We introduce function spaces and norms that are necessary for analysis.
Let   $p\geq1$ and  $m\geq 0$ be an integer. For any domain $D$, we let $W_p^m(D)$ ($H^m(D)=W_2^m(D)$) be the usual Sobolev space with (semi)-norms denoted by $|\cdot|_{m,p,D}$ and $\|\cdot\|_{m,p,D}$. ($\|\cdot\|_{m,D}=\|\cdot\|_{m,2,D}$).
 %\correct{and} $H_0^1(\Omega)$ be the subspace of $H^1(\Omega)$ with zero trace on the boundary.
  For $m=1,2$ and any domain $D=T (\in \mathcal{T}_h)$ or $D=\Omega$, let
\begin{eqnarray*}
(\wtilde{W}^m_p(D))^2:=\{\bu \, = (u_1, u_2) \in (W^{m-1}_p(D))^2 : \,u|_{D\cap \Omega^s}\in
(W^m_p(D\cap \Omega^s))^2, s = +,-\,\}~~~
\end{eqnarray*}
 with norms
\begin{eqnarray*}
|\bu|^p_{\wtilde{W}^m(D)} := |\bu|^p_{m,p,D\cap \Omega^+} + |\bu|^p_{m,p,D\cap \Omega^-},\,\,\mbox{ and }
\|\bu\|^p_{\wtilde{W}^m(D)} := \|u\|^p_{m-1,p,D} + |\bu|^p_{\wtilde{W}^m(D)}.
\end{eqnarray*}
When $p=2$, we write  $(\wtilde{H}^m(D))^2:=(\wtilde{W}^m_p(D))^2$ and denote
the  norms (resp. semi norms) by $\|\bu\|_{\wtilde{H}^m(D)}$ (resp. $|\bu|_{\wtilde{H}^m(D)}$),  etc.
  When a finite element
triangulation $\{\mathcal{T}_h\}$ is involved, the norms are understood as  piecewise norms $(\sum_{T\in\mathcal{T}_h}\|\bu\|^p_{\wtilde{W}^m_p(T)})^{1/p}$, etc. If $p=2$, we denote them by  $\|\bu\|_{m,h}$ (resp. $|\bu|_{m,h}$).
Let $\bH_h(\Omega) := (H_0^1(\Omega))^2 + \what{\bN}_h (\Omega)$.
 We need subspaces of $(\wtilde{H}^2(T))^2$ and $(\wtilde{H}^2(\Omega))^2$  satisfying the jump conditions:
\begin{eqnarray*}
(\wtilde{H}^2_{\Gamma}(T))^2 \hspace{-4pt}  &:=& \hspace{-4pt} \{\,\bu \in (\wtilde{H}^2(T))^2 \, \mbox{ and } \left[\bsigma(\bu) \cdot \bn \right]_{\Gamma\cap T} = 0 \},\\
(\wtilde{H}^2_{\Gamma}(\Omega))^2 \hspace{-4pt}  &:=& \hspace{-4pt} \{ \,\bu\in ( H^1_0(\Omega))^2 \, \bu|_{T} \in (\wtilde{H}^2_{\Gamma}(T))^2, \,\, \forall T\in \mathcal{T}_h \}.
\end{eqnarray*}

 Throughout the paper, the constants $C, C_0, C_1$, etc., are generic constants  independent
of the mesh size $h$ and functions $\bu, \bv$ but may depend on the problem data  $\mu, \lambda, \bff$ and $\Omega$, and are not necessarily the same on each occurrence.
\subsection{Approximation property of  $\what{\bN}_h(T)$}

Note that the case of a scalar elliptic problem is given in \cite{Kwak-We-Ch}.
One of the obstacles in proving the approximation property is: the space $\what{\bN}_h(T)$ does not belong to $ (\wtilde{H}^2_{\Gamma}(T))^2$ because the curved interface is approximated by the line segement.
 To overcome this difficulty, we introduce a bigger space which contains both of these spaces.
For a given interface element $T$, we define  function spaces $X(T)$ and $X_\Gamma(T)$ by
\begin{eqnarray}\label{sp:XT}
X(T)&:=& \left\{\bu :\bu \in (H^1(T))^2, \bu \in (H^2(S))^2 \mbox{ for all } S=T^+_r,T^-_r, T^+\cap\Omega^+, T^-\cap\Omega^-\right\}\\
 X_\Gamma(T)&:=& \left\{\bu :
\bu \in X(T),\int_{\Gamma\cap T}(\bsigma(\bu)^- - \bsigma(\bu)^+)\cdot \mathbf{n}_\Gamma\, ds = 0\right\}
\end{eqnarray}
where $\bsigma(\bu)^- = 2\mu^- \bepsilon(\bu) + \lambda^- \Div \bu$, $\bsigma(\bu)^+ = 2\mu^+ \bepsilon(\bu) + \lambda^+ \Div \bu$ and
$S=T^+_r,T^-_r, T^+\cap\Omega^+, T^-\cap\Omega^- $ are subregions of $T$ created by $\Gamma$ and line segment $\overline{\textrm{$DE$}}$ (See Fig. \ref{fig:regul}).

Note the relations
\begin{eqnarray}\label{sp:relation}
 (\wtilde{H}^2(T))^2 \hookrightarrow X(T) \hookrightarrow ({H}^1(T))^2 \\
(\wtilde{H}_\Gamma^2(T))^2\cup\what{\bN}_h(T)\hookrightarrow X_\Gamma(T)\hookrightarrow X(T)\hookrightarrow ({H}^1(T))^2
\end{eqnarray}
  For any $\bu \in  X(T) $, we define the
following norms:
\begin{eqnarray*}
\|\bu\|_{b,m,T}^2 &=&  \|\bu\|^2_{m,T}+m\cdot\|\sqrt{\lambda}\Div\bu\|^2_{0,T} , \, m=0,1\\
|\bu|^2_{X(T)} &=& |\bu|^2_{2,T^-\cap\Omega^-} + |\bu|^2_{2,T^+\cap\Omega^+} +
|\bu|^2_{2,T^-_r} + |\bu|^2_{2,T^+_r}, \\
\|\bu\|^2_{X(T)} &=& \|\bu\|^2_{1,T}+|\bu|^2_{X(T)} +\|\sqrt{\lambda}\Div\bu\|^2_{0,T}+ \sum_{s=+,-}|\sqrt{\lambda}\Div\bu|^2_{1,T^s} \\
%+ \lambda_M \left| \int_T \Div\bu\, dx \right|^2, \\
\tnorm{\bu}^2_{2,T}&=&|\bu|^2_{X(T)} + \sum_{s=+,-}|\sqrt{\lambda}\Div\bu|^2_{1,T^s} \\
%+ \lambda_M \left| \int_T \Div\bu\, dx \right|^2 \\
 &+&\left|\int_{\Gamma\cap T}[\bsigma(\bu) \mathbf{n}_\Gamma]\, ds\right|^2
 + \sum^3_{i=1}|\overline{u_1}|_{e_i}|^2+\sum^3_{i=1}|\overline{u_2}|_{e_i}|^2,
\end{eqnarray*}
Note that when $m=0$, $\|\bu\|^2_{b,m,T}$ is just the $L^2$-norm  $\|\bu\|^2_{0,T}$.
For $\bv\in \bH_h(\Omega)$, define
\begin{eqnarray} \label{Energy_norm}
\|\bv\|^2_{a_h}&:=&a_h(\bv,\bv)=\sum_{T\in\mathcal{T}_h}\left(\int_{T} 2\mu \epsilon(\bv):\epsilon(\bv) dx +\int_{T} \lambda |\Div \bv|^2 dx
\right)+ \sum_{e\in \mathcal{E}}\int_{e} \frac{\tau}{h}[\bv]^2 ds.
\end{eqnarray}

\begin{figure}[ht]
  \begin{center}
    \psset{unit=2.7cm}
    \begin{pspicture}(-1.5,-0.17)(1.5,1)
    \newrgbcolor{mycolor}{0.3 0.2 0.0}  \newcmykcolor{mycolor2}{0.4 0.0 0.1 0}
      \psset{linecolor=black} \pspolygon(-1.25,0)(-0.25,0)(-1.25,1)
      \psline(-1.25,0.65)(-0.9,0)
      \rput(-1.3,0){\scriptsize$A$}
      \rput(-0.2,0){\scriptsize$B$}
      \rput(-1.25,1.05){\scriptsize$C$}
      \rput(-1.12,0.12){\scriptsize$T^-$}
      \rput(-0.8,0.25){\scriptsize$T^+$}
      \rput(-0.9,-0.05){\scriptsize$D$}
      \rput(-1.3,0.65){\scriptsize$E$}
      \psset{linecolor=black}
      \pspolygon(0.25,0)(1.25,0)(0.25,1)
\pscustom[fillstyle=solid,fillcolor=lightgray]{
  \psline(0.6,0) (1.25,0)(0.25,1)(0.25,0.65)
 \pscurve(0.25,0.65)(0.35,0.58)(0.45,0.15)(0.6,0)
        }
\pscustom[fillstyle=solid,fillcolor=mycolor2]{
 \pscurve(0.25,0.65)(0.35,0.58)(0.45,0.15)(0.6,0)\psline(0.6,0) (0.25,0)(0.25,0.65)
        }
     \psline(.25,0.65)(0.6,0)
 \pscustom[fillstyle=vlines,fillcolor=gray,hatchsep=3pt]{
 \pscurve(0.25,0.65)(0.35,0.58)(0.45,0.15)(0.6,0)\psline(0.6,0) (.25,0.65)
  }
      \rput(0.2,0){\scriptsize$A$}
      \rput(1.3,0){\scriptsize$B$}
      \rput(0.25,1.05){\scriptsize$C$}
      \rput(0.365,0.1){\scriptsize$T\cap\Omega^-$}
      \rput(0.7,0.25){\scriptsize$T\cap\Omega^+$}
      \rput(0.6,-0.05){\scriptsize$D$}
      \rput(0.2,0.65){\scriptsize$E$}
      \rput(0.45,0.48){\scriptsize$\Gamma$}
% two spaces
 	 \rput(0.7,-0.08){\scriptsize$e_3$}
      \rput(0.78,0.53){\scriptsize$e_1$}
	 \rput(0.17,0.46){\scriptsize$e_2$}
      \rput(-0.8,-0.16){\scriptsize{(a) $\what{N}_h(T)\subset H^2(T^+)\cap H^2(T^-)$}}
      \rput(0.8,-0.16){\scriptsize{(b) $\wtilde{H}^2(T)\subset H^2(T\cap\Omega^+)\cap H^2(T\cap\Omega^-)$}}
\rput(0.22,0){
\rput(-.35,0.65){\scriptsize$T_r^+$}
\pnode(-.3,0.6){a}
\pnode(0.12,0.5){b}
\ncarc[linewidth=0.5\pslinewidth]{->}{a}{b}
\pnode(-.3,.3){a}
\pnode(0.22,0.21){b}
\ncarc[linewidth=0.5\pslinewidth]{->}{a}{b}
\rput(-.35,0.23){\scriptsize$T_r^-$}
}
    \end{pspicture}
    \caption{The real interface and the approximated interface} \label{fig:regul}
\end{center}
\end{figure}
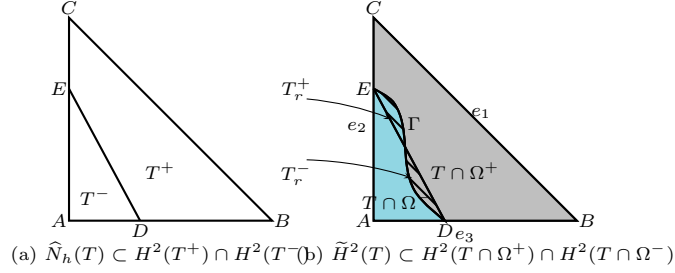

\begin{remark}\label{remark:X(T)}
\begin{enumerate}
\item The difference between the spaces $(\wtilde{H}^2(T))^2$ and $X(T)$ is this : a function
$\bu\in (\wtilde{H}^2(T))^2$ is ${H}^2$ in each of the regions $T^+$ and $T^-$ while a function
$\bu\in X(T)$ is ${H}^2$ in each of the four regions $T^+_r,T^-_r, T^+\cap\Omega^+, T^-\cap\Omega^-$.
\item   The difference between the spaces $(\wtilde{H}_\Gamma^2(T))^2$ and $X_\Gamma(T)$ is this : a function
$\bu\in(\wtilde{H}_\Gamma^2(T))^2$ satisfies the  a strong Laplace-Young condition (\ref{jump2.3}) along $\Gamma$  while  $\bu \in X_\Gamma(T)$ satisfies it weakly: $\int_{\Gamma\cap T}(\bsigma(\bu)^- - \bsigma(\bu)^+)\cdot \mathbf{n}_\Gamma\, ds = 0.$ In fact, for  every continuous, piecewise linear functions, this condition holds  if and only if it satisfies the same condition along the line segment joining the end points of the interface, as shown in the Lemma below.
\end{enumerate}
\end{remark}

\begin{lmm} \label{thm:wflux}
For an interface triangle $T$,  every continuous, piecewise linear function $\bphi$ satisfies
\begin{eqnarray}\label{weak:curv-cont}
\int_{\Gamma\cap T} [\bsigma(\bphi)\cdot\mathbf{n}_\Gamma] ds = 0 \mbox{ if and only if } \int_{\overline{DE}} [\bsigma(\bphi)\cdot\mathbf{n}_{\overline{\textrm{\tiny{$DE$}}}}] ds = 0.
\end{eqnarray}
% In other words, a piecewise linear function $\bphi$ satisfies the traction continuity along the line segment
% if and only if it  satisfies the same  condition \correct{ weakly} along the (curved) interface.
\end{lmm}
\begin{proof}
 This can be easily proved by Green's theorem since  $\bphi$ is piecewise linear.
\end{proof}

\begin{lmm} \label{lem:norm}
$\tnorm{\cdot}_{2,T}$ is a norm on the space $X_{\Gamma}(T)$ which is equivalent to
$\|\cdot\|_{X(T)}$.
\end{lmm}
\begin{proof}
Clearly,  $\tnorm{\cdot}_{2,T}$ is a semi-norm. To show it is indeed a norm, assume
  $\bu \in X_{\Gamma}(T) $ satisfies $\tnorm{\bu}_{2,T} = 0$. Then $|\bu|_{X(T)} = 0$. Hence  $\bu$ is linear on each of the
four regions $T^+\cap\Omega^+$, $T^-\cap\Omega^-$, $T^+_r$ and $T^-_r$.
 Since $\bu \in H^1(T)$, $\bu$ is continuous on  $T$. Since $\int_{\Gamma\cap T}[\bsigma(\bu)\cdot \mathbf{n}_\Gamma]\, ds=0,$
$\bu$ satisfies the interface condition along the line segment $\overline{DE}$  by Lemma \ref{thm:wflux}. Hence $\bu \in \what{\bN}_h(T)$ and
together with the fact that  $\overline{u_1}_{e_i}=0,~ i=1,2,3$ and $\overline{u_2}_{e_i}=0,~ i=1,2,3$, we conclude $\bu = 0$, which shows that $\tnorm{\cdot}_{2,T}$ is a norm.

We now show the equivalence of $\tnorm{\cdot}_{2,T}$ and $\|\cdot\|_{X(T)}$ on the space $ X_{\Gamma}(T)$.
(cf. \cite[p.77]{Braess}). By Sobolev embedding,
\begin{eqnarray}
\sum^3_{i=1} | \overline{u_1}_{e_i}| + \sum^3_{i=1} | \overline{u_2}_{e_i}| &\leq& C\max_{s=+,-}\|\bu\|_{L^{\infty}({T^s})} \\
&\leq& C\max_{s=+,-}\|\bu\|_{H^2(T^s)} \leq C\|\bu\|_{\wtilde{H}^2(T)}. \\
&\leq& C\|\bu\|_{X(T)} . \label{eq:Traces2}
\end{eqnarray}
 Hence we see
\begin{eqnarray}
\tnorm{\bu}_{2,T}&\leq& C\|\bu\|_{X(T)}. \label{eq:norms}
\end{eqnarray}

Now suppose that the converse
\begin{eqnarray*}
\|\bu\|_{X(T)}\leq C\tnorm{\bu}_{2,T},~~\forall \bu\in X_{\Gamma}(T)
\end{eqnarray*}
fails for any $C>0$. Then there exists a sequence $\{\bu_k\}$ in $X_{\Gamma}(T)$ with
\begin{eqnarray}\label{eq:Kondra1}
\|\bu_k\|_{X(T)}=1,~~\tnorm{\bu_k}_{2,T}\leq \frac{1}{k},~~k=1,2,\cdots .
\end{eqnarray}
%Bounded sequence in Hilbert space contains weak convergent subsequence(Banach Alaoglu).
Let $S_t, t=1,\cdots,4$ denote the four subregions defined in the definition of $X(T)$. Since $H^2(S_1)$ is compactly embedded in $H^1(S_1)$, \cite[p.114]{Ciarlet}, there exists a subsequence  of $\{\bu_k\}$
which converges in $(H^1(S_1))^2$. Applying the same argument successively  to the subsequences of previous ones on
 $S_2,S_3, S_4$,  we can  choose a subsequence, call $\{\bu_k\}$ again,
which converges on each of $S_t, t=1,2,3,4$.  Call its limit $\bu^*= (u^*,v^*)$. We claim that
$\bu^*\in (H^1(T))^2$. Note that $T=\cup_{t=1}^4 S_t.$ 
For simplicity, we assume the interface is a line segment so that $T= T^+\cup T^-$. The same argument shows
 that $\bu^*\in (H^1(T))^2$ when $T$ consists of four pieces, $T=\cup_{t=1}^4 S_t.$
Let us denote $\bu_k = (u_k,v_k)$ and $\bu^* = (u^*,v^*)$ respectively, and $u_k^s= u_k|_{T^s}, u^s = u^*|_{T^s}, s= +,-$.
Let $n^s_1$ be the first component of the unit outer normal vector to the boundary of $T^s, s=+,-$.

  By Green's theorem, and the fact $\lim_{k \rightarrow \infty} u^+_k= u^+=u^-$ on $\Gamma$, we get for $\phi\in C_0^\infty(T)$
\begin{eqnarray*}
\int_{T^+} \pd{u^+}{x}\phi dx&=& \left(\int_{\partial T^+} u^+ n^+_1 \phi\, ds- \int_{T^+} u^+ \pd{\phi}{x}dx \right)\\
%   &=& \lim_k\left(\int_{\Gamma} u^+_k n^+_1 \phi\, ds- \int_{T^+} u^+_k \pd{\phi}{x}dx \right)\\
&=& \lim_{k \rightarrow \infty} \left(\int_{\Gamma}u^-_k n^+_1 \phi \,ds -\int_{T^+} u^+_k \pd{ \phi}{x} dx\right).
\end{eqnarray*}
%Here we have used the fact that $u^+_k=u^-_k$ on $\Gamma$ and $\phi$ is zero on boundary of $T$.
Similarly,
\begin{eqnarray*}
\int_{T^-} \pd{u^-}{x}\phi dx&=& \left( \int_{\partial T^-} u^- n^-_1 \phi\, ds- \int_{T^-} u^-\pd{\phi}{x}dx \right)\\
  &=& \lim_{k \rightarrow \infty}\left( \int_{\Gamma} u^-_k  n^-_1 \phi\, ds- \int_{T^-} u^-_k \pd{\phi}{x}dx \right).
\end{eqnarray*}   Adding these two equations, we have
\begin{eqnarray*}
\int_{T^+} \pd{u^+}{x}\phi dx +  \int_{T^-} \pd{u^-}{x}\phi dx&=&  -\int_{T} u^*\pd{\phi}{x}dx.
\end{eqnarray*}
So if we define the function $w$ by
$$ w = \begin{cases} \pd{u^+_i}{x} &\mbox{ on }  T^+\\
 \pd{u^-_i}{x}  &\mbox{ on }  T^-
\end{cases}   $$
then it satisfies
$$ \int_{T} w \phi dx =- \int_{T} u^* \pd {\phi}{x} dx,\quad \phi\in C_0^\infty(T). $$
This shows $\pd {u^*}{x}$ is well defined in $L^2(T)$.
%$u^*_i\in H^1(T)$ whose partial derivative w.r.t $x$ is defined as
  The same argument shows that $\pd{u^*}{y}$ is also well defined in $L^2(T)$.
  The same argument applied to $v^*$ shows  $\bu^*=(u^*,v^*)\in (H^1(T))^2$ and hence $ \|\bu_k-\bu^*\|_{1,T}\to 0$.
Since
\begin{eqnarray*}
\|\bu_k -\bu_l\|^2_{X(T)} &=& \|\bu_k-\bu_l\|^2_{1,T}+\|\sqrt{\lambda}\Div(\bu_k-\bu_l)\|^2_{0,T}+ \tnorm{\bu_k-\bu_l}_{2,T}^2\\
&\leq& \|\bu_k-\bu^*\|^2_{1,T}+\|\bu^*-\bu_l\|^2_{1,T}\\
&& +\|\sqrt{\lambda}\Div(\bu_k-\bu^*)\|^2_{0,T}+\|\sqrt{\lambda}\Div(\bu^*-\bu_l)\|^2_{0,T}+(\tnorm{\bu_k}_{2,T}+\tnorm{\bu_l}_{2,T})^2\to 0
\end{eqnarray*} as $k,l\to\infty$,
we see that $\{\bu_k\}$ is a Cauchy sequence in $X_\Gamma(T)$. By completeness, it converges to a limit in $X_\Gamma(T)$ which is  $\bu^*$ and hence
\begin{eqnarray}\label{eq:normequiv1}
\|\bu^* \|_{X(T)} =\lim_{k\to\infty}\|\bu_k \|_{X(T)}= 1.
\end{eqnarray}
 Now (\ref{eq:norms}), (\ref{eq:Kondra1}) gives
 $$ \tnorm {\bu^*}_{2,T}\le \tnorm{\bu^*-\bu_k}_{2,T}+\tnorm{\bu_k}_{2,T} \le  C\|{\bu^*-\bu_k}\|_{X(T)}+\frac1k\to 0,$$
 this implies $\bu^*=0$. But this is a contradiction to (\ref{eq:normequiv1}).
\end{proof}

We define an interpolation operator:
for any $\bu \in (H^1(T))^2$, we define $I_h \bu \in \what{\bN}_h(T)$ using the average of
$\bu$ on each edge of $T$ %$I_h \bu = (w_1,w_2)$)
by
$$ \int _{e_i} I_h \bu\, ds = \int _{e_i}   \bu\, ds,~~ i=1,2,3$$
and call $I_h \bu$ the \emph{interpolant} of $\bu$ in $\what{\bN}_h(T)$. We then
define $I_h \bu $ for $ \bu \in (H^1(\OM))^2 $ by $(I_h \bu)|_{T}= I_h (\bu|_T)$.

Now we are ready to prove  the interpolation error estimate.
\begin{proposition}\label{thm:apperror}
For any $\bu \in (\wtilde{H}^2_\Gamma(\Omega))^2$, there exists a constant $C>0$ such that for $ m=0,1$
\begin{eqnarray*}
 \|\bu - I_h \bu\|_{m,h}+m\cdot\|\sqrt{\lambda}\Div (\bu - I_h\bu)\|_{L^2(\Omega)} \leq C h^{2-m} (
\|\bu\|_{\wtilde{H}^2(\Omega)}+m\cdot\sqrt{\lambda_M}\|\Div \bu\|_{\wtilde{H}^1(\Omega)}), \,
\end{eqnarray*}
and
\begin{eqnarray*}
\|\bu-I_h \bu\|_{m,h}\leq Ch^{2-m}\|\bu\|_{\wtilde{H}^2(\Omega)}.
\end{eqnarray*}
 
\end{proposition}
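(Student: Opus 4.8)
The plan is to prove the bound element by element, splitting $\mathcal{T}_h$ into noninterface and interface elements and establishing on each $T$ the single local estimate
$$\|\bu-I_h\bu\|_{0,T}+h\,|\bu-I_h\bu|_{1,T}+h\,\|\sqrt{\lambda}\,\Div(\bu-I_h\bu)\|_{0,T}\le Ch^{2}\bigl(|\bu|_{\wtilde{H}^2(T)}+\sqrt{\lambda_M}\,|\Div\bu|_{\wtilde{H}^1(T)}\bigr);$$
squaring, summing over $T$, and using $|\Div\bu|_{\wtilde{H}^1(\Omega)}\le C|\bu|_{\wtilde{H}^2(\Omega)}\le C\|\bu\|_{\wtilde{H}^2(\Omega)}$ together with the fact that $C$ may depend on $\lambda_M$, one obtains both displayed inequalities for $m=0$ and $m=1$ at once.

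On a noninterface element $T$ the space $\what{\bN}_h(T)$ is the ordinary vector Crouzeix--Raviart space $(P_1)^2$ and $I_h$ the classical Crouzeix--Raviart interpolant, so the standard scaling/Bramble--Hilbert argument gives $\|\bu-I_h\bu\|_{0,T}+h\,|\bu-I_h\bu|_{1,T}\le Ch^2|\bu|_{2,T}$. For the divergence term, the edge-average identity $\int_{e_i}I_h\bu\,ds=\int_{e_i}\bu\,ds$ and the divergence theorem give $\int_T\Div I_h\bu\,dx=\int_{\partial T}I_h\bu\cdot\bn\,ds=\int_{\partial T}\bu\cdot\bn\,ds=\int_T\Div\bu\,dx$; since $\Div I_h\bu$ is constant on $T$ it equals the mean value of $\Div\bu$ over $T$, and the Poincar\'e inequality then yields $\|\sqrt{\lambda}\,\Div(\bu-I_h\bu)\|_{0,T}\le Ch\sqrt{\lambda_M}\,|\Div\bu|_{1,T}$.

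The interface elements carry the real content. Map $T$ onto a reference element $\hat T$ of unit size by the dilation $x\mapsto x/h$; this preserves edge averages, the spaces $\what{\bN}_h$, $X$ and $X_\Gamma$, and the Laplace--Young condition (the stress tensor merely scales by $h^{-1}$), so by the dilation scaling relations $\|\cdot\|_{0,T}=h\|\cdot\|_{0,\hat T}$, $|\cdot|_{1,T}=|\cdot|_{1,\hat T}$, $|\cdot|_{2,\hat T}=h|\cdot|_{2,T}$, $\|\Div(\cdot)\|_{0,T}=\|\Div(\cdot)\|_{0,\hat T}$ the local estimate reduces to $\|\hat\bw\|_{X(\hat T)}\le C\bigl(|\hat\bu|_{\wtilde{H}^2(\hat T)}+\sqrt{\lambda_M}\,|\Div\hat\bu|_{\wtilde{H}^1(\hat T)}\bigr)$, where $\hat\bw:=\hat\bu-\widehat{I_h\bu}$. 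Since $\hat\bu\in(\wtilde{H}^2_\Gamma(\hat T))^2$ and $\widehat{I_h\bu}\in\what{\bN}_h(\hat T)$, relation (\ref{sp:relation}) gives $\hat\bw\in X_\Gamma(\hat T)$, so Lemma~\ref{lem:norm} yields $\|\hat\bw\|_{X(\hat T)}\le C\tnorm{\hat\bw}_{2,\hat T}$. Now every term of $\tnorm{\hat\bw}_{2,\hat T}$ other than the four-region $H^2$-type seminorms collapses: the six edge-average terms of $\hat\bw$ vanish because $I_h$ matches edge averages; the weak-flux term $\int_{\Gamma\cap\hat T}[\bsigma(\hat\bw)\cdot\bn_\Gamma]\,ds$ vanishes because $\hat\bu$ satisfies $[\bsigma(\hat\bu)\cdot\bn]_{\Gamma\cap\hat T}=0$ pointwise while $\widehat{I_h\bu}$ satisfies $[\bsigma(\widehat{I_h\bu})\cdot\bn]_{\overline{DE}}=0$ by (\ref{eq:dof1}), hence the weak condition along $\Gamma\cap\hat T$ by Lemma~\ref{thm:wflux}; and $\widehat{I_h\bu}$ is affine on each of $\hat T^+,\hat T^-$, so its contributions to $|\hat\bw|_{X(\hat T)}$ and to $|\sqrt{\lambda}\,\Div\hat\bw|_{1,\hat T^s}$ ($s=+,-$) vanish on each of the four subregions. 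Bounding the surviving four-region seminorms of $\hat\bw$ by the two-region seminorms $|\hat\bu|_{\wtilde{H}^2(\hat T)}$ and $\sqrt{\lambda_M}\,|\Div\hat\bu|_{\wtilde{H}^1(\hat T)}$ — each subregion lies inside $\hat T\cap\hat\Omega^+$ or $\hat T\cap\hat\Omega^-$ — gives the reference estimate, and scaling back and summing over $T\in\mathcal{T}_h$ finishes the proof.

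The step I expect to be the main obstacle is the uniformity of the constant in Lemma~\ref{lem:norm}: the lemma as stated holds for a fixed interface triangle, whereas the argument above needs an equivalence constant that does not degenerate as the shape of $\hat T$, the chord endpoints $D,E$, and the (nearly straight) arc vary. This is repaired by one more compactness argument over the compact set of admissible configurations — exactly as for the scalar diffusion problem in \cite{Kwak-We-Ch} — after which everything reduces to the norm equivalence, the defining identities of $I_h$ and of the basis functions, and the dilation scaling.
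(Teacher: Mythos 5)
Your argument is essentially the paper's own proof: scale each interface element to a unit-size reference configuration, use the edge-average identities, Lemma \ref{thm:wflux}, and the vanishing second derivatives of the piecewise-linear interpolant to collapse $\tnorm{\bu-I_h\bu}_{2,T}$ to the $X$-type seminorms of $\bu$, invoke the norm equivalence of Lemma \ref{lem:norm}, and scale back, with your explicit handling of noninterface elements (including the mean-value identity for $\Div I_h\bu$) being a routine addition. The only deviations are minor: you obtain the second displayed inequality by absorbing $\sqrt{\lambda_M}$ into $C$ (legitimate under the paper's stated convention on constants, though the paper instead reruns the argument with the divergence terms removed from the norms, which keeps that constant independent of $\lambda$), and the uniformity of the constant in Lemma \ref{lem:norm} over varying interface configurations that you flag is indeed a point the paper's own proof also leaves implicit rather than a defect specific to your version.
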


\begin{proof}
 Let $\check{T}$ be a reference interface element, $\check{\Gamma}$ be the corresponding local reference interface, and $\check{\bu}(\check\bx):=\bu\circ \bF(\check\bx)$, where
 $ \bF:\check{T}\to T $ denote the affine mapping to define the finite element in the real domain. Then for any $\check{\bu}\in  (\wtilde{H}^2_\Gamma(\check{T}))^2\subset
X_\Gamma(\check{T})  $, (let us denote $\check{\bu} = (\check u_1,\check u_2)$ and $I_h \check{\bu} = (\check w_1,\check w_2)$)
\begin{eqnarray*}
 \tnorm{\check{\bu}-I_h\check{\bu}}^2_{2,\check{T}} &=&|\check{\bu}-I_h\check{\bu}|^2_{X(\check{T})}+
 \sum_{s=+,-}|\sqrt{\lambda}\Div(\check{\bu}-I_h\check{\bu})|^2_{1,\check{T}^s}\\
&&+ \left|\int_{\check{\Gamma}\cap \check{T}}[(\bsigma(\check\bu) -\bsigma(I_h \check\bu)) \cdot \mathbf{n}_\Gamma]\, ds\right|^2
 +\sum^3_{i=1} |(\overline{\check u_1-\check w_1})|_{e_i}|^2+\sum^3_{i=1}|(\overline{\check u_2-\check w_2})|_{e_i}|^2\\
&=& |\check{\bu} - I_h \check{\bu}|^2_{X(\check{T})} +\sum_{s=+,-}|\sqrt{\lambda}\Div(\check{\bu}-I_h\check{\bu})|^2_{1,\check{T}^s} =
|\check{\bu}|^2_{X(\check{T})} + \sum_{s=+,-}|\sqrt{\lambda}\Div\, \check {\bu}|^2_{1,\check{T}^s},
\end{eqnarray*} %Lemma \ref{lem:Div-intpol}
where we used the properties of the interpolation operator $I_h$, Lemma \ref{thm:wflux},
and the  fact that $H^2$-seminorm of the piecewise linear
function $I_h \tilde{\bu}$ vanishes.
 
Let $m=0$ or $1$. By Lemma \ref{lem:norm} and scaling argument,
\begin{eqnarray*}
 \|\bu- I_h\bu\|_{b,m,T} &\leq& Ch^{1-m}\|\check{\bu}-I_h\check{\bu}\|_{b,m,\check{T}}   \\
 &\leq& Ch^{1-m}\|\check{\bu}-I_h\check{\bu}\|_{X(\check{T})} \\
 &\leq& Ch^{1-m}\tnorm{\check{\bu}-I_h\check{\bu} }_{2,\check{T}}  \\
&=& Ch^{1-m}(|\check{\bu} |_{X(\check{T})}+m\cdot\sum_{s=+,-}|\sqrt{\lambda}\Div\,\check{\bu} |_{1,\check{T}^s} )\\
&\leq& Ch^{2-m}(|\bu|_{X(T)}+m\cdot\sum_{s=+,-}|\sqrt{\lambda}\Div\,\bu |_{1,T^s})\\
&\leq& Ch^{2-m}(\|\bu\|_{\wtilde{H}^2(T)}+m\cdot\sum_{s=+,-}|\sqrt{\lambda}\Div\,\bu |_{1,T^s}).
\end{eqnarray*}
For the second assertion
 one can proceed exactly the same way
 without the terms involving  $\Div\bu$ in the definition of norms $\|\cdot\|_{b,m,T}, \|\cdot\|_{X(T)}$
 and $\tnorm{\cdot}_{2,T}$ to obtain the desired estimate.
\end{proof}
%Scaling here 5-th line- requires shape regularity also.

\begin{proposition} \label{1-a-norm}
Let $\bu \in (\wtilde{H}^2_\Gamma(\Omega))^2$. We have
\begin{eqnarray}\label{1-a-norm-eq}
\|\bu-I_h\bu\|_{a_h} \leq Ch\left(\|\bu\|_{\wtilde{H}^2(\Omega)} + \sqrt{\lambda_M}\|\Div \bu\|_{\wtilde{H}^1(\Omega)}\right),
\end{eqnarray}
for some constant $C > 0$.
%\lambda_M^{1/2}\|\Div(\bu-I_h\bu)\|_{L^2(\Omega)} \leq C h \lambda_M^{1/2}\|\bu\|_{\wtilde{H}^2(\Omega)}.
\end{proposition}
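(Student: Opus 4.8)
The plan is to split $\|\bu-I_h\bu\|^2_{a_h}$ according to the three groups of terms in the definition (\ref{Energy_norm}) --- the strain energy, the penalized divergence, and the edge--jump stabilization --- and to bound each of them by $Ch^2\big(\|\bu\|_{\wtilde{H}^2(\Omega)}+\sqrt{\lambda_M}\|\Div\bu\|_{\wtilde{H}^1(\Omega)}\big)^2$; estimate (\ref{1-a-norm-eq}) then follows by taking square roots. Throughout set $\bv:=\bu-I_h\bu$, and note that $\bv|_T\in(H^1(T))^2$ for every $T$, since $X(T)\hookrightarrow(H^1(T))^2$ while $\bu|_T\in(\wtilde{H}^2_\Gamma(T))^2\subset X(T)$ and $(I_h\bu)|_T\in\what{\bN}_h(T)\subset X(T)$.

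For the strain term, the pointwise inequality $\bepsilon(\bv):\bepsilon(\bv)\le|\nabla\bv|^2$ together with $\mu\le\mu_2$ gives $\sum_{T\in\mathcal{T}_h}\int_T 2\mu\,\bepsilon(\bv):\bepsilon(\bv)\,dx\le 2\mu_2\,|\bv|^2_{1,h}$. For the penalized divergence term one simply has $\sum_{T\in\mathcal{T}_h}\int_T\lambda|\Div\bv|^2\,dx=\|\sqrt{\lambda}\,\Div\bv\|^2_{0,\Omega}$. Both $|\bv|_{1,h}$ and $\|\sqrt{\lambda}\,\Div\bv\|_{0,\Omega}$ are controlled by $Ch\big(\|\bu\|_{\wtilde{H}^2(\Omega)}+\sqrt{\lambda_M}\|\Div\bu\|_{\wtilde{H}^1(\Omega)}\big)$ directly from Proposition \ref{thm:apperror} with $m=1$, so these two volumetric contributions are handled at once.

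The substantive step is the stabilization term $\sum_{e\in\mathcal{E}}\frac{\tau}{h}\|[\bv]\|^2_{0,e}$. The key point is that $[\bv]$ has \emph{zero mean on every edge} $e$: since $\bu\in(H^1_0(\Omega))^2$ its trace is single-valued across interior edges and vanishes on boundary edges, so $\int_e[\bu]\,ds=0$; and since $I_h\bu\in\what{\bN}_h(\Omega)$, the defining matching of edge averages (together with the homogeneous condition on $\partial\Omega$) gives $\int_e[I_h\bu]\,ds=0$; hence $\int_e[\bv]\,ds=0$. Consequently, on an interior edge $e=\partial T_1\cap\partial T_2$ we may write $[\bv]|_e=\big(\bv|_{T_1}-\overline{\bv|_{T_1}}^{\,e}\big)-\big(\bv|_{T_2}-\overline{\bv|_{T_2}}^{\,e}\big)$, the edge averages cancelling, and on a boundary edge $[\bv]=\bv=\bv-\overline{\bv}^{\,e}$. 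For each element $T$ with $e\subset\partial T$ we invoke the scaled trace--Poincar\'e inequality $\|w-\overline{w}^{\,e}\|^2_{0,e}\le Ch\,|w|^2_{1,T}$ for $w\in H^1(T)$, which follows from the corresponding inequality on the reference triangle --- obtained by a Deny--Lions / Bramble--Hilbert quotient argument, the map $w\mapsto w-\overline{w}^{\,e}$ annihilating constants --- and the usual affine scaling, the constant being uniform by quasi-uniformity of $\{\mathcal{T}_h\}$. Summing over the (at most two) elements meeting each edge and then over all edges, and using that each triangle has three edges, yields $\sum_{e\in\mathcal{E}}\frac{\tau}{h}\|[\bv]\|^2_{0,e}\le C\tau\,|\bv|^2_{1,h}$, which is again bounded by $Ch^2\|\bu\|^2_{\wtilde{H}^2(\Omega)}$ by Proposition \ref{thm:apperror}.

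Adding the three estimates and taking square roots gives (\ref{1-a-norm-eq}). I expect the only real obstacle to be this stabilization term --- concretely, justifying that $[\bu-I_h\bu]$ is mean-zero on each edge (which relies on $\bu$ being $H^1$-conforming \emph{and} on $I_h\bu$ lying in the constrained global space $\what{\bN}_h(\Omega)$, not merely in $\what{\bN}_h(T)$ element by element) and then controlling $\|[\bv]\|_{0,e}$ by the local $H^1$-seminorm through a scaled trace--Poincar\'e inequality with a mesh-independent constant; once this is in place everything reduces to the interpolation estimate of Proposition \ref{thm:apperror}.
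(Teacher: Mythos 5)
Your proposal is correct, and its overall skeleton (split $\|\bu-I_h\bu\|_{a_h}^2$ into the strain, divergence and edge-stabilization pieces, then reduce everything to Proposition \ref{thm:apperror}) is exactly the paper's; the volumetric terms are handled identically. Where you genuinely diverge is the jump term. The paper does not use the zero edge-mean of the interpolation error at all: it simply bounds $h^{-1}\|[\bu-I_h\bu]\|^2_{0,e}$ by the traces from the adjacent element(s) via the standard scaled trace inequality $h^{-1}\|w\|^2_{0,e}\le C\bigl(h^{-2}\|w\|^2_{0,T}+|w|^2_{1,T}\bigr)$, and then invokes Proposition \ref{thm:apperror} twice, with $m=0$ for the $h^{-2}\|\cdot\|^2_{0,T}$ contribution and $m=1$ for the seminorm contribution. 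Your route instead observes that $\int_e(\bu-I_h\bu)|_T\,ds=0$ on every edge (indeed each one-sided trace is mean-zero by the very definition of $I_h$, so your appeal to the global space $\what{\bN}_h(\Omega)$ and to $\bu\in(H^1_0)^2$ is more elaborate than necessary) and then applies a trace--Poincar\'e inequality $\|w-\overline{w}^{\,e}\|^2_{0,e}\le Ch\,|w|^2_{1,T}$, so that only the $m=1$ interpolation estimate is needed. Both arguments are valid and give the same $O(h)$ bound; the paper's is shorter because the $L^2$ ($m=0$) interpolation estimate is already available in Proposition \ref{thm:apperror}, while yours is marginally more self-contained in that it never touches the $L^2$ error, at the cost of proving (by Deny--Lions plus scaling) the mean-value trace inequality. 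One small caveat: state explicitly that the constant absorbs $\mu_2$ and $\tau$, as the paper's generic constants do.
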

\begin{proof}
Recall that
\begin{eqnarray*} %\label{Energy_normA}
\|\bu-I_h\bu\|^2_{a_h}&=& \sum_{T\in\mathcal{T}_h}\int_{T} \left(2\mu \epsilon(\bu-I_h\bu):\epsilon(\bu-I_h\bu) + \lambda |\Div (\bu-I_h\bu)|^2\right) dx
 + \sum_{e\in \mathcal{E}}\int_{e} \frac{\tau}{h}[\bu-I_h\bu]^2 ds.
\end{eqnarray*}
Clearly, the terms in the first summation are bounded by the $\|\bu-I_h\bu\|_{b,1,T}$ for each element. Hence these are bounded by right hand side of (\ref{1-a-norm-eq}) by Proposition \ref{thm:apperror}.
For the second term, we have
\begin{eqnarray}
\frac{1}{h}\|[\bu-I_h\bu]\|^2_{0,e} &\leq&
\frac{1}{h}\|\bu-I_h\bu\|^2_{0,e} \\
&\leq& C\left(\frac{1}{h^2}\|\bu-I_h\bu\|^2_{0,T} + |\bu-I_h\bu|^2_{1,T}\right) \\
 &\leq& Ch^2\|\bu\|^2_{\wtilde{H}^2(T)},
\end{eqnarray} by trace inequality and Proposition \ref{thm:apperror}.
This completes the proof.
\end{proof}

\begin{lmm} [Korn's inequality  \cite{Brenner-Korn},\cite{Ciarlet-Elast}] \label{korn}
There exists constant $C >0$ such that
\begin{equation} \label{H1P-Korns3}
 \, |\bv_h|_{1,h}^2\le C\sum_{T\in\mathcal{T}_h} (\|\bepsilon(\bv_h)\|^2_{0,T}+ \|Q(\bv_h)\|^2_{0,T})+  \sum_{e \in \mathcal{E}}  \int_{e}\frac \tau h [\bv_h]^2ds , \forall \bv_h\in \what\bN(T),
\end{equation}
where $Q(\bv_h):=\bv_h-\frac{1}{|T|}\int_T \bv_h\,dx.$
 \end{lmm}
 \begin{corollary} \label{H1P-Korns2}
 The form $a_h(\cdot,\cdot)$ is a norm equivalent to $\|\cdot\|_{1,h}$.
%+ \tau \sum_{e \in \mathcal{E}}  \int_{e} h^{-1} [\bu_h][\bv_h]ds
\end{corollary}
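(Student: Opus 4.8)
The plan is to show the two inequalities $c\,\|\bv_h\|_{1,h}^2 \le a_h(\bv_h,\bv_h) \le C\,\|\bv_h\|_{1,h}^2$ for all $\bv_h \in \what{\bN}_h(\Omega)$, from which the claim that $\|\cdot\|_{a_h} = a_h(\cdot,\cdot)^{1/2}$ is a norm equivalent to $\|\cdot\|_{1,h}$ follows at once (in particular nondegeneracy of $a_h$ is a consequence of the lower bound). The upper bound is the routine direction: on each $T$ one has $\|\bepsilon(\bv_h)\|_{0,T} \le |\bv_h|_{1,T}$ and $\|\Div\bv_h\|_{0,T} \le \sqrt{2}\,|\bv_h|_{1,T}$, while the jump term is controlled by the trace inequality on each side of an edge together with an inverse inequality, yielding $\int_e h^{-1}[\bv_h]^2\,ds \le C\sum_{T \supset e}|\bv_h|^2_{1,T}$; summing over $T$ and $e$ and adding $\|\bv_h\|_{0,h}^2$ gives $a_h(\bv_h,\bv_h) \le C\|\bv_h\|_{1,h}^2$. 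Here $\lambda$ and $\mu$ are bounded (by hypothesis $\mu_1 < \mu < \mu_2$, $\lambda < \infty$), so the material coefficients only contribute constants.

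For the lower bound I would invoke Lemma \ref{korn} directly. Dropping the (nonnegative) $\lambda$-divergence term from $a_h$ and using $\mu \ge \mu_1 > 0$, we get
\begin{equation*}
a_h(\bv_h,\bv_h) \ge 2\mu_1 \sum_{T\in\mathcal{T}_h}\|\bepsilon(\bv_h)\|_{0,T}^2 + \tau\sum_{e\in\mathcal{E}}\int_e \tfrac1h[\bv_h]^2\,ds,
\end{equation*}
which is, up to the constant $\min(2\mu_1,\tau)$ times a harmless rescaling, bounded below by $\sum_T(\|\bepsilon(\bv_h)\|_{0,T}^2 + \|Q(\bv_h)\|_{0,T}^2) + \sum_e\int_e\frac\tau h[\bv_h]^2\,ds$ \emph{minus} the $\|Q(\bv_h)\|_{0,T}^2$ terms — so I must still absorb those. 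The remaining step is therefore to bound $\sum_T\|Q(\bv_h)\|_{0,T}^2 = \sum_T\|\bv_h - \frac1{|T|}\int_T\bv_h\|_{0,T}^2$ by the strain and jump terms: a standard broken Poincaré (Poincaré--Friedrichs) inequality for the nonconforming space gives $\sum_T\|Q(\bv_h)\|_{0,T}^2 \le C\bigl(\sum_T|\bv_h|_{1,T}^2\text{-type bound}\bigr)$, but since $|\bv_h|_{1,h}$ is exactly what we are trying to control, one instead iterates: apply Lemma \ref{korn}, bound the $Q$-term by a broken Poincaré inequality in terms of $|\bv_h|_{1,h}$ plus jump terms with a small constant $\epsilon$, and a second appeal to Korn controls $|\bv_h|_{1,h}$ by $a_h + \epsilon|\bv_h|_{1,h}$; choosing $\epsilon$ small and kicking back yields $|\bv_h|_{1,h}^2 \le C a_h(\bv_h,\bv_h)$. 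Finally adding $\|\bv_h\|_{0,h}^2 \le C(|\bv_h|_{1,h}^2 + \sum_e\int_e\frac\tau h[\bv_h]^2)$ (again broken Poincaré, using the zero boundary average) upgrades this to $\|\bv_h\|_{1,h}^2 \le C a_h(\bv_h,\bv_h)$.

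The main obstacle is the lower bound, specifically disposing of the zero-order term $\|Q(\bv_h)\|_{0,T}$ appearing on the right side of Korn's inequality: it is not literally controlled by $a_h$ on a single element, and one needs a global broken Poincaré--Friedrichs inequality for $\what{\bN}_h(\Omega)$ (which holds because of the edge-average continuity across interior edges and the vanishing edge average on $\partial\Omega$) to convert the piecewise means into something absorbable into the strain and jump terms via an $\epsilon$-Young argument. I expect the anisotropy introduced by interface elements to be harmless here: the estimates in Lemma \ref{korn} are already stated for $\what{\bN}_h$ and the trace/inverse/Poincaré inequalities used only require quasi-uniformity of $\mathcal{T}_h$, not mesh alignment with $\Gamma$, so no new interface-specific difficulty arises beyond what Lemma \ref{korn} has already absorbed.
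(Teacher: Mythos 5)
Your upper bound and your final step (recovering the $L^2$ part via the broken Poincar\'e inequality for the CR space, using the edge-average continuity and the vanishing boundary averages) are fine and agree with what the paper does. The problem is in the one step that actually carries the proof: the removal of the $\|Q(\bv_h)\|_{0,T}$ terms from the right-hand side of Lemma \ref{korn}. You propose to bound $\sum_T\|Q(\bv_h)\|^2_{0,T}$ by a \emph{global} broken Poincar\'e--Friedrichs inequality, i.e.\ by $C\,|\bv_h|^2_{1,h}$ plus jump terms, and then to absorb this by an ``$\epsilon$-Young'' / iteration argument ``with a small constant $\epsilon$''. There is no source for that smallness: the constant in a global broken Poincar\'e inequality is a fixed constant of the domain and the mesh family, it cannot be chosen small, and Young's inequality cannot split the single term $C|\bv_h|^2_{1,h}$ into $\epsilon|\bv_h|^2_{1,h}$ plus something controlled by $a_h$ --- there is no product of two different quantities to balance. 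As written, your chain gives $|\bv_h|^2_{1,h}\le C_1\sum_T\|\bepsilon(\bv_h)\|^2_{0,T}+C_1C\,|\bv_h|^2_{1,h}+\mbox{jumps}$ with $C_1C$ not less than one, so the kick-back fails and the lower bound is not established.

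The missing observation --- which is the entirety of the paper's proof --- is that $Q(\bv_h)=\bv_h-\frac1{|T|}\int_T\bv_h\,dx$ has zero mean on each element, so the \emph{elementwise} Poincar\'e--Wirtinger inequality applies with a constant proportional to the element diameter: $\|Q(\bv_h)\|_{0,T}\le C\,h_T\,|\bv_h|_{1,T}$ (the paper records this as $\|Q(\bv_h)\|^2_{0,T}\le C(T)\,h\,|\bv_h|^2_{1,T}$). The smallness is thus $O(h)$, not an adjustable $\epsilon$: inserting this into Lemma \ref{korn} and taking $h$ sufficiently small, the term $C\,h\,|\bv_h|^2_{1,h}$ is absorbed into the left-hand side, giving $|\bv_h|^2_{1,h}\le C\bigl(\sum_T\|\bepsilon(\bv_h)\|^2_{0,T}+\sum_T\lambda\|\Div\bv_h\|^2_{0,T}+\sum_e\int_e\frac\tau h[\bv_h]^2\,ds\bigr)\le C\,a_h(\bv_h,\bv_h)$, after which the CR Poincar\'e inequality of \cite{Kwak-We-Ch} upgrades the seminorm to $\|\cdot\|_{1,h}$ exactly as you intended. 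Note also that the resulting equivalence holds only for $h$ sufficiently small, a restriction your statement should carry. With the global Poincar\'e step replaced by this elementwise, $h$-scaled bound, your argument closes and coincides with the paper's.
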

\begin{proof} There exists a constant $C(T) >0$ such that the following holds.
  \begin{eqnarray*}
 \|Q(\bv_h)\|^2_{0,T} &\le&  C(T)h |\bv_h|^2_{1,T}.
\end{eqnarray*}
Hence by Lemma \ref{korn}, we have
  $$|\bv_h|^2_{1,h} \le C \sum_{T\in\mathcal{T}_h}\left(\|\bepsilon(\bv_h)\|^2_{0,T} +\int_{T} \lambda |\Div \bv_h|^2 dx\right)
 + \sum_{e \in \mathcal{E}} \int_{e} \frac \tau h [\bv_h]^2ds,\ \mbox{ for all } \bv_h\in \what\bN(T)  $$
holds for sufficiently small $h$.
 Hence by Poincar\'e inequality for CR finite element spaces \cite{Kwak-We-Ch}, we get the result.
\end{proof}

%To prove an error estimate, we need the following Lemmas.
\begin{lmm} \label{consistency-term}
Let $\bu \in \tilde{H}^2(\Omega)$ be the solution of (\ref{TheprimalEq}).  We assume $\sigma(\bu)\cdot\bn\in (H^1(T))^2$ for each $T$. Then following inequality holds:
$$|a_h(\bu,\bv_h)-a_h(\bu_h,\bv_h)| \leq Ch R(\bu) \|\bv_h\|_{a_h},$$
where
$$ R(\bu)= \|\bu\|_{\wtilde{H}^2(\Omega)} +\lambda_M \|\Div \bu\|_{\wtilde{H}^1(\Omega)}. $$
\end{lmm}

\begin{proof}
Using the technique in \cite{Crouzeix}, we see that
the consistency error term  satisfies
\begin{eqnarray*}
\left|a_h(\bu,\bv_h)-a_h(\bu_h,\bv_h)\right| &=&
  \left|\sum_{T\in\mathcal{T}_h} \sum_{e \subset \partial T} \int_{e} \bsigma(\bu) \bn   \cdot [\bv_h] ds\right|\\
  &=&  \left|\sum_{T\in\mathcal{T}_h} \sum_{e \subset \partial T} \int_{e} (\bsigma(\bu) \bn-\overline{\bsigma(\bu) \bn}) \cdot [\bv_h] ds\right|\\
  &\le &C h \sum_{T} \|\bsigma(\bu)\cdot\bn\|_{H^1(T)}  |\bv_h|_{1,T}  \\
  &\le &C h (\|\bu\|_{\wtilde{H}^2(\Omega)} +\lambda_M \|\Div \bu\|_{\wtilde{H}^1(\Omega)}) \|\bv_h\|_{a_h}
\end{eqnarray*} by Corollary \ref{H1P-Korns2}.

\end{proof}

Now we are ready to prove the $H^1$-error estimate.
\begin{thrm} \label{H1error}
Let $\bu$ (resp. $\bu_h$) be the  solution of (\ref{TheprimalEq})(resp. (\ref{Discrete})).
Under the assumption that  $\sigma(\bu)\cdot\bn\in (H^1(\Omega))^2$,  we have
$$\|\bu-\bu_h\|_{a_h} \leq Ch \left(\|\bu\|_{\wtilde{H}^2(\Omega)} +\lambda_M \|\Div \bu\|_{\wtilde{H}^1(\Omega)} \right). $$
\end{thrm}

\begin{proof}
By triangular inequality, we have
$$\|\bu-\bu_h\|_{a_h} \leq \|\bu_h - I_h\bu\|_{a_h} + \|\bu - I_h\bu\|_{a_h}.$$
%Since the bilinear form $a_h(\cdot,\cdot)$ is continuous and coercive with respect to the norm $\|\cdot\|_{a_h}$
We have
\begin{eqnarray*}
 \|\bu_h-I_h\bu\|^2_{a_h} &=& a_h(\bu_h - I_h\bu,\bu_h -I_h\bu) \\
&=& a_h(\bu-I_h\bu,\bu_h-I_h\bu) + a_h(\bu_h - \bu,\bu_h -I_h\bu)\\
&\leq& \|\bu_h-I_h\bu\|_{a_h}\|\bu-I_h\bu\|_{a_h} + Ch R(\bu)\|\bu_h-I_h\bu\|_{a_h},
\end{eqnarray*} by Lemma \ref{consistency-term}.
So we have
$$\|\bu_h-I_h\bu\|_{a_h}\leq  \| \bu-I_h\bu\|_{a_h} + Ch R(\bu). $$
Finally, by Proposition \ref{1-a-norm} we have
\begin{eqnarray*}\|\bu-\bu_h\|_{a_h} &\leq& 2\|\bu-I_h\bu\|_{a_h} + ChR(\bu)
  \leq  C_1 h R(\bu). \end{eqnarray*}
\end{proof}

\begin{remark} % The following expression 'extra regularity' has possibility of misunderstanding, need to be changed to say, 'assumption'..................... then the result of Theorem \ref{H1error} becomes
  If the extra regularity $\|\bu\|_{\wtilde{H}^2(\Omega)} +\lambda_M \|\Div \bu\|_{\wtilde{H}^1(\Omega)} \le C\|f\|_{0} $ holds, then the result of Theorem \ref{H1error} improves to
$$\|\bu-\bu_h\|_{a_h}\le  Ch\|f\|_{0}.$$  This would mean that our estimate holds uniformly when $\lambda\to \infty$.
Furthermore, by standard duality argument, we can obtain $L^2$- error estimate of the form:
\begin{eqnarray*}\|\bu-\bu_h\|_{0} &\leq& C h^2\|f\|_{0}.
 \end{eqnarray*}
\end{remark}

\section{Numerical results}

In this section we present numerical examples. The domain is  $\Omega=(-1,1)\times(-1,1)$.
The interface is the zero set of $L(x,y)=x^2 + y^2 - r_0^2$. Let $\Omega^+=\Omega\cap \{(x,y)| L(x,y)>0 \} $,
$\Omega^-=\Omega\cap \{(x,y)| L(x,y)<0 \}$.
The exact solution is chosen as
$$\bu =\left(\frac{1}{\mu}(x^2 + y^2 - r_0^2)x, \frac{1}{\mu}(x^2+y^2-r_0^2)y\right)$$
with various values of $\mu$ and $\lambda$.
For numerical simulation we
partition the domain into uniform right triangles having size $h=2^{-k}, k=3,4,\cdots$.
 \begin{example} In this example, we test two sets of parameters and radii of the interface.
 \begin{enumerate}
 \item   We choose $\mu^- = 1,~\mu^+=100 , \lambda = 5\mu$ and $r_0 = 0.36$.
 \item  We choose $\mu^- = 1,~\mu^+=10 , \lambda = 5\mu$ and $r_0 = 0.48$.
 \end{enumerate}

Tables \ref{tab:1_100_5} and \ref{tab:1_10_5_2} show the convergence behavior of our numerical schemes for both examples.
In both cases, we see the optimal order of convergence in $L^2$, $H^1$ and divergence norms.
$x$-components of the solution are plotted in Figures \ref{fig:1_100_5} and  \ref{fig:1_10_5}.
\end{example}
%\ref{tab:1_100_5}
\begin{table}[h]
\begin{center}
\begin{tabular}{|c||c|c||c|c||c|c|}
\hline   $1/h$& \small{$\|\bu-\bu_h\|_{0}$} & order & \small{$\|\bu-\bu_h\|_{1,h}$} & order & \small{$\|\Div \bu - \Div \bu_h\|_{0}$} & order\\
\hline %\cline{2-8}
          8  & 1.887e-3 &  & 4.098e-2 &  & 4.694e-2 & \\ %\cline{2-8}
           16  & 5.354e-4 & 1.817 & 1.957e-2 & 1.066 & 2.311e-2 & 1.022\\ %\cline{2-8}
%\multirow{2}{*}{IFEM}
           32  & 1.186e-4 & 2.175 & 9.547e-3 & 1.036 & 1.089e-2 & 1.085\\ %\cline{2-8}
            64  & 2.864e-5 & 2.050 & 4.850e-3 & 0.977 & 5.382e-3 & 1.017\\ %\cline{2-8}
           128  & 6.793e-6 & 2.076 & 2.430e-3 & 0.997 & 2.637e-3 & 1.029\\ %\cline{2-8}
           256  & 1.673e-6 & 2.021 & 1.217e-3 & 0.998 & 1.310e-3 & 1.009\\ %\cline{2-8}
          \hline
\end{tabular}
\caption{ $\mu^- = 1,~\mu^+=100 , \lambda = 5\mu$}
\label{tab:1_100_5}
\end{center}
\end{table}

%\begin{figure}[h]
%\begin{center}
%\includegraphics[scale=0.6]{elasticity_1_100_5.eps}
%\caption{x-component, $\mu^- = 1,~\mu^+=100 , \lambda = 5\mu$}
% \label{fig:1_100_5}
%\end{center}
%\end{figure}

\begin{table}[hbt]
\begin{center}
\begin{tabular}{|c||c|c||c|c||c|c|}
\hline   $1/h$& \small{$\|\bu-\bu_h\|_{0}$} & order & \small{$\|\bu-\bu_h\|_{1,h}$} & order & \small{$\|\Div \bu - \Div \bu_h\|_{0}$} & order\\
\hline
             8  & 2.910e-3 &  & 7.972e-2 &  & 8.598e-2 & \\ %\cline{2-8}
            16  & 7.450e-4 & 1.966 & 3.822e-2 & 1.061 & 4.155e-2 & 1.049\\ %\cline{2-8}
            32  & 1.841e-4 & 2.017 & 1.942e-2 & 0.977 & 2.091e-2 & 0.991\\ %\cline{2-8}
            64  & 4.606e-5 & 1.999 & 9.787e-3 & 0.989 & 1.049e-2 & 0.996\\ %\cline{2-8}
           128  & 1.143e-5 & 2.010 & 4.920e-3 & 0.992 & 5.255e-3 & 0.997\\ %\cline{2-8}
           256  & 2.851e-6 & 2.004 & 2.466e-3 & 0.997 & 2.630e-3 & 0.999\\ %\cline{2-8}
          \hline
\end{tabular}
\caption{ $\mu^- = 1,~\mu^+=10 , \lambda = 5\mu$}
\label{tab:1_10_5_2}
\end{center}
\end{table}

%\begin{figure}[htbp]
%\begin{center}
%\includegraphics[scale=0.6]{elasticity_1_10_5.eps}
%\caption{x-component, $\mu^- = 1,~\mu^+=10, \lambda = 5\mu$}
%\label{fig:1_10_5}
%\end{center}
%\end{figure}
\begin{example}[Nearly incompressible case]
\begin{enumerate}
 \item We let $\mu^- = 1,~\mu^+=10, \lambda = 100\mu, \nu=0.495$ and $r_0 = 0.7$.
 \item We let $\mu^- = 1,~\mu^+=10, \lambda = 1000\mu ,\nu=0.4995$ and $r_0 = 0.6$.
 \end{enumerate}
\begin{table}[hbt]
\begin{center}
\begin{tabular}{|c||c|c||c|c||c|c|}
\hline   $1/h$& \small{$\|\bu-\bu_h\|_{0}$} & order & \small{$\|\bu-\bu_h\|_{1,h}$} & order & \small{$\|\Div \bu - \Div \bu_h\|_{0}$} & order\\
\hline
             8  & 7.733e-3 &  & 1.456e-1 & & 2.132e-1 & \\ %\cline{2-8}
            16  & 2.487e-3 & 1.644 & 7.541e-2 & 0.949 & 1.136e-1 & 0.909\\ %\cline{2-8}
            32  & 7.434e-4 & 1.742 & 3.729e-2 & 1.016 & 5.527e-2 & 1.039\\ %\cline{2-8}
            64  & 2.124e-4 & 1.807 & 1.876e-2 & 0.991 & 2.730e-2 & 1.018\\ %\cline{2-8}
           128  & 5.508e-5 & 1.948 & 9.417e-3 & 0.994 & 1.347e-2 & 1.019\\ %\cline{2-8}
           256  & 1.428e-5 & 1.948 & 4.719e-3 & 0.997 & 6.686e-3 & 1.011\\ %\cline{2-8}
          \hline
\end{tabular}
\caption{ $\mu^- = 1,~\mu^+=10 , \lambda = 100\mu$}
\label{tab:1_10_100}
\end{center}
\end{table}

%\begin{figure}[htbp]
%\begin{center}
%\includegraphics[scale=0.6]{elasticity_1_10_100.eps}
%\caption{x-component, $\mu^- = 1,~\mu^+=10 , \lambda = 100\mu$}
%\label{fig:1_10_100}
%\end{center}
%\end{figure}

\begin{table}[hbt]
\begin{center}
\begin{tabular}{|c||c|c||c|c||c|c|}
\hline  $1/h$& \small{$\|\bu-\bu_h\|_{0}$} & order & \small{$\|\bu-\bu_h\|_{1,h}$} & order & \small{$\|\Div \bu - \Div \bu_h\|_{0}$} & order\\
\hline
           8  & 7.655e-2 &  & 1.125e-1 &  & 1.628e-0 & \\ %\cline{2-8}
            16  & 2.372e-2 & 1.690 & 5.570e-2 & 1.014 & 9.065e-1 & 0.846\\ %\cline{2-8}
            32  & 6.806e-2 & 1.801 & 2.829e-2 & 0.978 & 4.518e-1 & 1.004\\ %\cline{2-8}
            64  & 1.847e-3 & 1.882 & 1.417e-2 & 0.997 & 2.247e-1 & 1.008\\ %\cline{2-8}
           128  & 4.811e-4 & 1.941 & 7.110e-3 & 0.995 & 1.111e-1 & 1.016\\ %\cline{2-8}
           256  & 1.230e-4 & 1.968 & 3.563e-3 & 0.997 & 5.534e-2 & 1.006\\ %\cline{2-8}
          \hline
\end{tabular}
\caption{ $\mu^- = 1,~\mu^+=10 , \lambda = 1000\mu$}
 \label{tab:1_10_1000_2}
\end{center}
\end{table}

%\begin{figure}[htbp]
%\begin{center}
%\includegraphics[scale=0.6]{elasticity_1_10_1000.eps}
%\caption{x-component, $\mu^- = 1,~\mu^+=10 , \lambda = 1000\mu$}
% \label{fig:1_10_1000}
%\end{center}
%\end{figure}

 Tables \ref{tab:1_10_100}  and \ref{tab:1_10_1000_2}  show the convergence behavior.
In both cases, we see the optimal order of convergence in $L^2$, $H^1$ and divergence norms.
No locking phenomena occurs in both cases.
Again $x$-components of the solution  are plotted in Figures \ref{fig:1_10_100} and  \ref{fig:1_10_1000}.
\end{example}

\begin{example}[Ellipse interface case]
Next we consider examples with elliptic shaped interface. The domain is the same as above, and the
interface is represented by $L(x,y)=\frac{x^2}{4} + y^2 - r_0^2 = 0$.
The exact solution is chosen as
$$\bu =\left(\frac{1}{\mu}(\frac{x^2}{4} + y^2 - r_0^2)x, \frac{1}{\mu}(\frac{x^2}{4}+y^2-r_0^2)y\right)$$
with various values of $\mu$ and $\lambda$.
\begin{enumerate}
 \item We let $\mu^- = 1,~\mu^+=10, \lambda = 5\mu , r_0 = 0.4 $.
 \item  We let $\mu^- = 1,~\mu^+=100, \lambda = 5\mu , r_0 = 0.3 $.
\end{enumerate}

\begin{table}[hbt]
\begin{center}
\begin{tabular}{|c||c|c||c|c||c|c|}
\hline  $1/h$& \small{$\|\bu-\bu_h\|_{0}$} & order & \small{$\|\bu-\bu_h\|_{1,h}$} & order & \small{$\|\Div \bu - \Div \bu_h\|_{0}$} & order\\
\hline
             8  & 2.477e-3 &  & 5.920e-2 &  & 6.744e-2 & \\ %\cline{2-8}
            16  & 6.689e-4 & 1.888 & 2.909e-2 & 1.025 & 3.340e-2 & 1.014\\ %\cline{2-8}
            32  & 1.704e-4 & 1.973 & 1.480e-2 & 0.975 & 1.694e-2 & 0.979\\ %\cline{2-8}
            64  & 4.200e-5 & 2.020 & 7.485e-3 & 0.983 & 8.531e-3 & 0.990\\ %\cline{2-8}
           128  & 1.029e-5 & 2.029 & 3.765e-3 & 0.992 & 4.281e-3 & 0.995\\ %\cline{2-8}
           256  & 2.579e-6 & 1.996 & 1.886e-3 & 0.997 & 2.144e-3 & 0.998\\ %\cline{2-8}
          \hline
\end{tabular}
\caption{ $\mu^- = 1,~\mu^+=10 , \lambda = 5\mu,$ elliptical interface}
\label{tab:1_10_5_ellipse}
\end{center}
\end{table}

%\begin{figure}[htbp]
%\begin{center}
%\includegraphics[scale=0.6]{ellipse_1_10_5.eps}
%\caption{x-component, $\mu^- = 1,~\mu^+=10 , \lambda = 5\mu$ elliptical interface}
%\label{fig:1_10_5_ellipse}
%\end{center}
%\end{figure}

\begin{table}[hbt]
\begin{center}
\begin{tabular}{|c||c|c||c|c||c|c|}
\hline   $1/h$& \small{$\|\bu-\bu_h\|_{0}$} & order & \small{$\|\bu-\bu_h\|_{1,h}$} & order & \small{$\|\Div \bu - \Div \bu_h\|_{0}$} & order\\
\hline
   8  & 2.018e-3 &  & 3.164e-2 &  & 3.788e-2 & \\ %\cline{2-8}
   16  & 6.644e-4 & 1.647 & 1.424e-2 & 1.151 & 2.066e-2 & 0.875\\ %\cline{2-8}
   32  & 1.376e-4 & 2.227 & 7.314e-3 & 0.962 & 9.592e-3 & 1.107\\ %\cline{2-8}
   64  & 2.736e-5 & 2.330 & 3.735e-3 & 0.969 & 4.458e-3 & 1.105\\ %\cline{2-8}
   128  & 6.896e-6 & 1.988 & 1.880e-3 & 0.991 & 2.229e-3 & 1.000\\ %\cline{2-8}
   256  & 1.726e-6 & 1.998 & 9.434e-4 & 0.994 & 1.107e-3 & 1.010\\ %\cline{2-8}
 \hline
\end{tabular}
\caption{ $\mu^- = 1,~\mu^+=100 , \lambda = 5\mu,$ elliptical interface}
\label{tab:1_100_5_ellipse}
\end{center}
\end{table}

%\begin{figure}[htbp]
%\begin{center}
%\includegraphics[scale=0.6]{ellipse_1_100_5.eps}
%\caption{x-component, $\mu^- = 1,~\mu^+=100 , \lambda = 5\mu$ elliptical interface}
%\label{fig:1_100_5_ellipse}
%\end{center}
%\end{figure}

 Tables \ref{tab:1_10_5_ellipse}  and \ref{tab:1_100_5_ellipse}  show the convergence behavior.
We observe similar optimal convergence rates for all norms.
 Figures \ref{fig:1_10_5_ellipse}  and \ref{fig:1_100_5_ellipse} show the $x$-components of the solution.
\end{example}

\begin{example}[Unknown solution]
This last example computes a problem with unknown solution.
We choose $\mu^- = 1,~\mu^+=100, \nu^- = 0.28, \nu^+ = 0.4 , r_0 = 0.3 $ and
 $\bF = \left(-\frac{11}{4} - \frac{\lambda}{\mu}x, -\frac{29}{4} - \frac{\lambda}{\mu}y\right)$ with the same elliptical interface as in the previous example.

Figure \ref{fig:unknown_sol} shows the x-component of the computed solution.
%\begin{figure}[htbp]
%\begin{center}
%\includegraphics[scale=0.6]{unknown_solution.eps}
%\caption{x-component, $\mu^- = 1,~\mu^+=100 , \nu^- = 0.28, \nu^+ = 0.4 , r_0 = 0.3$ unknown solution}
% \label{fig:unknown_sol}
%\end{center}
%\end{figure}
\end{example}
\section{Conclusion}

In the present work, we have developed a new finite element method for solving planar elasticity problems with an interface along which distinct materials are bonded.
The methods are based on the IFEM using CR element modified to satisfy  Laplace-Young condition along the interface.
Our methods yield smaller matrix size than XFEM since we do not use any extra dofs other than the
edge based functions.
The jump terms along the edges are added to ensure the stability of the scheme.

We have proved an interpolation error in $H^1$ and  $H(div)$ norm (with $\sqrt{\lambda}$ factor).
 For the error estimate of $\bu-\bu_h$, we have obtained an optimal $O(h)$ error in $H^1$ and $H(div)$ norm under the regularity that  $\bsigma(\bu)\in (H^1(\Omega))^2$.
 The numerical tests show the optimal $O(h)$ error in $H^1$ norm, and  $O(h^2)$ in $L^2$ norm.

As future works, we will consider problems with nonhomogeneous jump conditions and three dimensional problems.

\begin{appendices}
We sketch the proof of  Proposition \ref{prop:exiten-CRbasis}.
The degrees of freedom and point continuity of (\ref{eq:dof1}) give rise to
the ten equations for the coefficients of $\hat{\phi}_1$ and $\hat{\phi}_2$,
in the form
\begin{equation}\label{eq:Amatri0}
\begin{pmatrix}
  A  &  \mathbf 0 \\
 \mathbf 0 & A
 \end{pmatrix} \begin{pmatrix}
  \bc_1 \\  \bc_2 \end{pmatrix} =\begin{pmatrix}
  \bg_1  \\  \bg_2 \end{pmatrix}
\end{equation} where
%which corresponds to the
%following matrix
\begin{equation}\label{eq:Amatrix1}
 {A}  = \left(
\begin{array}{cccccc}
  1 & \frac 12 & \frac 12 & 0 & 0 & 0 \\
  1-y & 0 & \frac12 (1-y^2) & y & 0 & \frac12 y^2 \\
  1-x & \frac12 (1-x^2) & 0 & x & \frac12 x^2 & 0 \\
  -1 & -x & 0 & 1 & x & 0 \\
  -1 & 0 & -y & 1 & 0 & y
\end{array}
\right)
\end{equation} and  $\bc_i=(a^+_i,b^+_i,c^+_i,a^-_i,b^-_i,c^-_i)$, $i=1,2$ are the vector of the unknowns.
The jump conditions along the interface (last equations of  (\ref{eq:dof1})) give rise to the following equations.
 \begin{eqnarray}\label{eq:Amatr_jump} \mu^+\begin{pmatrix} b_{1}^+ & c_{1}^+ \\ b_{2}^+ &c_{2}^+ \end{pmatrix}\cdot\bn-\mu^- \begin{pmatrix} b_{1}^- & c_{1}^- \\ b_{2}^-&c_{2}^-\end{pmatrix}\cdot\bn
 + \mu^+\begin{pmatrix} b^+_1 & b^+_2 \\
 c^+_1 & c^+_2\end{pmatrix}\cdot\bn
 - \mu^-\begin{pmatrix} b^-_1 & b^-_2 \\
 c^-_1 & c^-_2\end{pmatrix}\cdot\bn \\
+ \lambda^+ (b^+_1+ c^+_2)\bn-\lambda^-(b^-_1 + c^-_2)\bn=0.\nonumber
 \end{eqnarray}
Combining  (\ref{eq:Amatri0}) and (\ref{eq:Amatr_jump}), we get the following system of twelve equations in
twelve unknowns.
\begin{equation}
 {M} =
\begin{pmatrix}
  A  &  \mathbf 0 \\ \mathbf d_1^T & \mathbf d_2^T \\
 \mathbf 0 & A    \\ \be_1^T & \be_2^T
 \end{pmatrix}  \begin{pmatrix}
  \bc_1 \\  \bc_2 \end{pmatrix} =\begin{pmatrix}
 \bg_1   \\ 0   \\  \bg_2  \\ 0\end{pmatrix} .
\end{equation}
Here
\begin{eqnarray*}
\mathbf d_1^T &= & \begin{pmatrix} 0,(2\mu^+ +\lambda^+)n_1,\mu^+n_2 ,0,-(2\mu^- +\lambda^-) n_1,-\mu^-n_2\end{pmatrix}:=(d_{1i})_{i=1}^6, \\
\mathbf d_2^T&= &\begin{pmatrix}  0, \mu^+ n_2,\lambda^+n_1, 0,- \mu^-n_2,-\lambda^-n_1\end{pmatrix}:=(d_{2i})_{i=1}^6\\
\be_1 ^T&= & \begin{pmatrix} 0, \lambda^+ n_2,\mu^+n_1, 0,-\lambda^-n_2,-\mu^-n_1\end{pmatrix}:=(e_{1i})_{i=1}^6\\
\be_2^T &= &\begin{pmatrix} 0, \mu^+n_1,(2\mu^+ +\lambda^+)n_2,0,-\mu^-n_1,-(2\mu^- +\lambda^-) n_2\end{pmatrix}:=(e_{2i})_{i=1}^6.
\end{eqnarray*}
Now we will compute the determinant of   $M$.
Adding columns 6,5,4 to 3,2,1 and columns 12,11,10 to 9,8,7 (resp.), and by row  eliminations,
we obtain following.
 \begin{equation}\label{eq:app_basic-02}
M' :=
\begin{pmatrix}
 U   &    0  & |&  {\bf O}  & 0 \\
 0   & \bar d_{66}& |&  \bar {\mathbf d_2}^T   & 0\\
 --- &    -  & |&  ---- & -\\
 {\bf O} &    0  & |&  U    & 0 \\
 0   & \bar e_{16}& |&  0    &\bar e_{66}
 \end{pmatrix},\mbox{ where } U= \begin{pmatrix}
  1 & \frac 12 & \frac 12 & 0 & 0  \\
  0 &- \frac 12 & 0 & y & 0  \\
  0 & 0  &-\frac 12 & x & \frac12 x^2  \\
  0 & 0 & 0 & 1 & x   \\
  0 & 0 & 0 & 0 &-x
\end{pmatrix}
\end{equation}
Here $\bar d_{66}, \bar e_{16}$ and $\bar e_{66}$ are given by
\begin{eqnarray*}
\label{eq:app_e16}
x \bar d_{66}&=&-n_1 y\{(2\mu^+ +\lambda^+)x y+(2\mu^- +\lambda^-)(1-xy)\}-xn_2\{\mu^+ xy+ \mu^-(1-xy)\},\\
x \bar e_{16}&=&-n_2y \{\lambda^+xy+\lambda^-(1-xy)\} -n_1x \{ \mu^+xy +\mu^-(1-xy)\},\\
x \bar e_{66}&=&- yn_1\{\mu^+ xy +\mu^-(1-xy)\}-xn_2\{(2\mu^- +\lambda^-)(1-xy)+(2\mu^+ +\lambda^+)xy\}.
\end{eqnarray*}
\begin{lmm} The determinant of matrix $M'$ is given  as follows.
\begin{eqnarray}\label{eq:Det-Mpr}
det(M') %=\{det(U)^2\bar d_{66}\bar e_{66} +det(U)\bar e_{16} *cofac\}
=\frac1 {16}\{ x\bar d_{66} x\bar e_{66}-4 x\bar e_{16} \cdot cofac\}.
\end{eqnarray}
Here, with the notation $[\lambda]=\lambda^+-\lambda^-,  [\lambda]= \lambda^+-\lambda^-$, cofac is given by
\begin{eqnarray*}
cofac&=&-\frac14 ([\mu]  n_2 xy^2+[\lambda]n_1 x^2y+ x\lambda^-n_1 +y\mu^-n_2).
\end{eqnarray*}
\end{lmm}
\begin{proof}
 This can be obtained by expanding the determinant with resp. to fifth column of  $M'$.
\end{proof}
\begin{proposition} The determinant of matrix $M'$ is always negative.
\end{proposition}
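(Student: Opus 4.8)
The plan is to show $\det(M')<0$ by working directly from the formula
\[
\det(M') = \tfrac1{16}\bigl\{ x\bar d_{66}\, x\bar e_{66} - 4\, x\bar e_{16}\cdot \mathrm{cofac}\bigr\}
\]
established in the preceding Lemma, treating the three quantities $x\bar d_{66}$, $x\bar e_{66}$, $x\bar e_{16}$ and $\mathrm{cofac}$ as explicit functions of the interface data $x,y\in(0,1]$ (the two points where $\overline{DE}$ meets the coordinate edges of the reference triangle), the normal $\bn=(n_1,n_2)$ with $n_1,n_2$ of definite sign on that segment, and the positive material parameters $\mu^\pm,\lambda^\pm$. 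First I would record the sign conventions forced by the geometry of Figure \ref{fig:interel}: with $D=(x,0)$ and $E=(0,y)$ and the normal pointing from $T^-$ into $T^+$, one has $n_1>0$, $n_2>0$ (or the opposite, but consistently), and the convex combinations $\mu^+xy+\mu^-(1-xy)$, $(2\mu^\pm+\lambda^\pm)xy+\cdots$, $\lambda^+xy+\lambda^-(1-xy)$ appearing in $x\bar d_{66}, x\bar e_{66}, x\bar e_{16}$ are all strictly positive, being convex combinations of positive numbers with weights $xy\in(0,1]$ and $1-xy\in[0,1)$. Consequently $x\bar d_{66}<0$ and $x\bar e_{66}<0$, so the product $x\bar d_{66}\,x\bar e_{66}$ is strictly positive; likewise $x\bar e_{16}<0$.

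Next I would analyze the cross term $-4\,x\bar e_{16}\cdot\mathrm{cofac}$. Since $x\bar e_{16}<0$, the sign of this term is that of $+\mathrm{cofac}$, and
\[
\mathrm{cofac} = -\tfrac14\bigl([\mu]\,n_2 x y^2 + [\lambda]\,n_1 x^2 y + x\lambda^- n_1 + y\mu^- n_2\bigr),
\]
which is \emph{not} sign-definite because $[\mu]=\mu^+-\mu^-$ and $[\lambda]=\lambda^+-\lambda^-$ may have either sign. So a naive term-by-term argument fails, and the real work is to show that the positive contribution $x\bar d_{66}\,x\bar e_{66}$ dominates $4\,x\bar e_{16}\cdot\mathrm{cofac}$ in absolute value even in the worst case. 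The strategy is to substitute $\mu^+ = \mu^- + [\mu]$, $\lambda^+=\lambda^-+[\lambda]$ throughout and expand $\det(M')$ as a polynomial in $[\mu]$ and $[\lambda]$ with coefficients that are manifestly positive functions of $\mu^-,\lambda^-,x,y,n_1,n_2$; one then recognizes the resulting expression as a sum of squares plus positive terms, or more precisely as (minus) a positive-definite quadratic form in $([\mu],[\lambda])$ shifted so that its value remains strictly positive for all admissible parameters. Because $\mu^+>0$ forces $[\mu]>-\mu^-$ and $\lambda^+\ge0$ forces $[\lambda]\ge-\lambda^-$, it suffices to check strict positivity of $-\det(M')$ on this half-plane of parameters, which reduces to verifying that the quadratic has positive leading coefficients and that its minimum over the admissible region is positive — the latter typically being attained on the boundary $\mu^+\to0$ or $\lambda^+\to0$, where the expression collapses to a recognizably positive form.

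The main obstacle I anticipate is precisely this algebraic bookkeeping: expanding the product $x\bar d_{66}\,x\bar e_{66}$ and the term $4\,x\bar e_{16}\cdot\mathrm{cofac}$ into comparable monomials in $x,y,n_1,n_2,\mu^\pm,\lambda^\pm$ is lengthy, and the cancellations that produce a clean sum-of-squares structure are delicate. A useful simplification is that on $\overline{DE}$ one can take the (unnormalized) normal proportional to $(y,x)$, so that $n_1\propto y$ and $n_2\propto x$; substituting this removes $n_1,n_2$ in favor of $x,y$ and should turn $\det(M')$ into a single polynomial in $x,y,\mu^\pm,\lambda^\pm$ whose negativity can be checked by grouping into squares. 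I would carry out the expansion with that substitution first, isolate the coefficient of the highest power of $xy$, confirm it is a negative definite form in the parameters, and then handle the lower-order remainder; the only genuinely non-routine point is exhibiting the dominating square, after which positivity of $-\det(M')$ — hence invertibility of $M$ and thus of the original basis system, completing the proof of Proposition \ref{prop:exiten-CRbasis} — follows. The homogeneity of the problem under scaling $(\mu^\pm,\lambda^\pm)\mapsto t(\mu^\pm,\lambda^\pm)$ can also be exploited to normalize, say, $\mu^-=1$, reducing the number of free parameters by one.
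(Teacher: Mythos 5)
Your outline follows the same route as the paper: start from the Lemma's formula $\det(M')=\tfrac1{16}\{x\bar d_{66}\,x\bar e_{66}-4x\bar e_{16}\cdot\mathrm{cofac}\}$, substitute the normal $\bn\propto(y,x)$ of $\overline{DE}$, and then show by algebraic grouping that $-\det(M')>0$. Your preliminary observations are correct as far as they go: with $0<x,y\le 1$ the quantities $\mu^+xy+\mu^-(1-xy)$, $\lambda^+xy+\lambda^-(1-xy)$, $(2\mu^++\lambda^+)xy+(2\mu^-+\lambda^-)(1-xy)$ are positive convex combinations, so $x\bar d_{66}<0$, $x\bar e_{66}<0$, $x\bar e_{16}<0$, and you rightly note that $\mathrm{cofac}$ is not sign-definite, so the whole burden is a dominance inequality between the product term and the cross term.

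The genuine gap is that this dominance inequality — the entire content of the proposition — is never established. You describe a program (``expand in $[\mu],[\lambda]$, recognize a sum of squares, check the minimum on the boundary $\mu^+\to0$ or $\lambda^+\to0$'') and explicitly defer ``the only genuinely non-routine point,'' namely ``exhibiting the dominating square.'' That is precisely what a proof must supply, and without it nothing is proved; moreover the boundary-minimization heuristic is speculative and would itself require justification. For comparison, the paper closes this step in a few lines: after the substitution $(n_1,n_2)=(y,x)/\sqrt{x^2+y^2}$, set $A=[\mu]xy+\mu^-=\mu^+xy+\mu^-(1-xy)>0$ and $B=[\lambda]xy+\lambda^->0$; then
\begin{equation*}
-16\sqrt{x^2+y^2}\,\det(M')=y^4A(2A+B)+x^4A(2A+B)+x^2y^2\bigl\{(2A+B)^2+A^2\bigr\}-(xy)^2(A+B)^2,
\end{equation*}
and positivity follows at once from $A>0$, $2A+B>0$ and the identity $(2A+B)^2+A^2-(A+B)^2=4A^2+2AB>0$. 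So your strategy is viable and essentially the intended one, but the proposal stops exactly where the proof begins; to complete it you must carry out the expansion and exhibit this (or an equivalent) explicit positive grouping rather than assert that one exists.
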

\begin{proof}
 Substituting  $(n_1,n_2)= \frac{(y,x)}{\sqrt{x^2+y^2}}$  into (\ref{eq:Det-Mpr}) we see
\begin{eqnarray*}
-16\sqrt{x^2+y^2}\cdot det(M')& = &\left\{ y^2\{(2\mu^+ +\lambda^+)x y+(2\mu^- +\lambda^-)(1-xy)\}+x^2\{\mu^+ xy+ \mu^-(1-xy)\} \right\} \\
&&   \cdot \left\{y^2\{\mu^+ xy +\mu^-(1-xy)\}+x^2\{(2\mu^- +\lambda^-)(1-xy)+(2\mu^+ +\lambda^+) xy\} \right\}\\
&&    + \bar e_{16} x (([\mu]+[\lambda]) x^2y^2+ ( \lambda^-+\mu^-)xy ) \\
&=&  \left\{ y^2([2\mu  +\lambda ]x y+(2\mu^- +\lambda^-))+x^2([\mu] xy+ \mu^-) \right\} \\
&& \cdot \left\{y^2([\mu] xy +\mu^-)+x^2([2\mu +\lambda] xy+(2 \mu^- +\lambda^-)) \right\}\\
&& - (xy)^2(([\mu]+[\lambda]) xy+ (\lambda^-+\mu^-))^2\\
&=& y^4A(2A+B)+x^4A(2A+B)+x^2y^2\{(2A+B)^2+A^2\}-(xy)^2(A+B)^2>0
\end{eqnarray*}
where $A=[\mu]xy+\mu^- , B= [\lambda] xy+ \lambda^- $.
Hence the determinant is always  negative.
\end{proof}
\end{appendices}
\end{document}